\newtheorem{theorem}{Theorem}[section]
\newtheorem{proposition}[theorem]{Proposition}
\newtheorem{lemma}[theorem]{Lemma}
\def\R{{\mathbb R}}
\def\eps{\varepsilon}
\def\E{{\mathbb E}}
\def\P{{\mathbb P}}
\numberwithin{equation}{section}
\begin{document}

\title[Propagation of Singularities for Stochastic Wave Equation]
{Propagation of Singularities for the Stochastic\\ Wave Equation}
\author{Cheuk Yin Lee
 \and Yimin Xiao
}

\keywords{Stochastic wave equation, singularities, law of the iterated logarithm}

\subjclass[2010]{60H15, 60G17}

\begin{abstract}
We study the existence and propagation of singularities of the solution to a 
one-dimensional linear stochastic wave equation driven by an additive Gaussian noise that is 
white in time and colored in space. Our approach is based on a simultaneous law of 
the iterated logarithm and general methods for Gaussian processes.
\end{abstract}

\maketitle

\section{Introduction}

Consider the linear stochastic wave equation in one spatial dimension:
\begin{equation}\label{SWE}
\begin{cases}
\vspace{6pt}
\displaystyle{\frac{\partial^2}{\partial t^2} u(t, x) - \frac{\partial^2}{\partial x^2} u(t, x) = 
\dot{W}(t, x), \quad t \ge 0, x \in \mathbb{R},}\\
\displaystyle{u(0, x) = 0, \quad \frac{\partial}{\partial t} u(0, x) = 0,}
\end{cases}
\end{equation}
where $\dot{W}$ is a Gaussian noise that is white in time and colored in space with
spatial covariance given by the Riesz kernel of exponent $0 < \beta < 1$, i.e.
\begin{equation}\label{Eq:Riesz}
\E[\dot{W}(t, x) \dot{W}(s, y)] = \delta_0(t-s) |x-y|^{-\beta}.
\end{equation}
The existence of the mild solution $u(t, x)$ of \eqref{SWE} was studied by Dalang \cite{D99}, 
see \eqref{SSWE}  below. The purpose of this article is to study the existence and propagation 
of singularities of $\{u(t, x), (t, x) \in \R_+\times \R\} $. 

In this article, 
singularities are associated with the law of the iterated logarithm (LIL), 
or the local modulus of continuity. They refer to the random points at which the process exhibits
local oscillations that are much larger than those given by the LIL. For Brownian motion, 
this phenomenon was first studied by Orey and Taylor \cite{OT74}. It is well known that at a 
fixed time point, the local oscillation of a Brownian sample path satisfies the LIL almost surely. 
However, it is not true that the LIL holds simultaneously for all points with probability 1. Indeed, 
Orey and Taylor \cite{OT74} show that, according to L\'{e}vy's uniform modulus of continuity, 
one can find random points at which the LIL fails and the oscillation is exceptionally large.
Therefore these exceptional points may be defined as singularities. Similarly, singularities can 
be defined for other stochastic processes and, more generally, random fields.

The singularities of the Brownian sheet and the one-dimensional stochastic wave equation
driven by the space-time white noise were studied by Walsh \cite{W82, W}, and the case of 
semi-fractional Brownian sheet was studied by Blath and Martin \cite{BM08}.
Based on a simultaneous law of the iterated logarithm, Walsh \cite{W82} showed that the
singularities of the Brownian sheet propagate parallel to the coordinate axis. Moreover, 
Walsh \cite{W} established an interesting relation between the Brownian sheet and the 
solution $u(t, x)$ to \eqref{SWE} driven by the space-time white noise. 
Indeed, it follows from Theorem 3.1 in \cite{W} 
that the solution can be decomposed into three components:
\begin{equation}\label{Eq:Walsh}
u(t, x) = \frac{1}{2}\left[ B\Big( \frac{t-x}{\sqrt{2}}, \frac{t+x}{\sqrt{2}} \Big) +
\hat{W}\Big(\frac{t-x}{\sqrt{2}}, 0\Big) + \hat{W}\Big(0, \frac{t+x}{\sqrt{2}} \Big) \right],
\end{equation}
where the main component $B$ is a Brownian sheet and $\hat{W}$ is the modified Brownian 
sheet defined in Chapter 1 of Walsh \cite{W}, and  the processes $\{B(s, t), s, t \ge 0\}$, 
$\{\hat{W}(s, 0), s \ge 0\}$ and $\{\hat{W}(0, t), t \ge 0\}$ are independent. This relation 
implies that the singularities of $u(t, x)$ propagate along the characteristic 
lines $t-x = c$ and $t+x = c$.

The study of singularities of the solution to \eqref{SWE} driven by the space-time white noise in 
\cite{W82, W} was later extended by Carmona and Nualart \cite{CN88} to one-dimensional 
nonlinear stochastic wave equations driven by the space-time white noise on the whole line $\R$ and 
on a bounded interval, respectively. Their method is based 
on the general theory of semimartingales and two-parameter strong martingales. They showed that, 
in the white noise case, their solution $X(t, x)$ has the following important properties:
\begin{itemize}
\item[(i).]\, For any $x \in \R$, $\big\{X( \frac h {\sqrt{2}}, x + \frac h {\sqrt{2}}), h \ge 0\big\} $ is 
a continuous semimartingale (cf, \cite[p.741]{CN88}).
\item[(ii).]\, The increments of $X(t, x)$ over a  certain class of rectangles form a two-parameter 
strong martingale (cf, \cite[p.740]{CN88}).
\end{itemize}
In \cite{CN88},  Carmona and Nualart  proved the law of the iterated logarithm for a semimartingale 
by the LIL of Brownian motion and a time change. They also proved that, for a class of 
two-parameter strong martingales, the law of the iterated logarithm in one variable holds 
simultaneously for all values of the other variable. By applying these results and properties (i) and (ii),
Carmona and Nualart  proved the existence and propagation of singularities of the solution
in \cite[Theorem 3.1]{CN88}. 

The main objective of this article is to study the existence and propagation of singularities of the 
solution to the linear stochastic wave equation \eqref{SWE} driven by a Gaussian noise that is 
white in time and colored in space with spatial covariance  given by (\ref{Eq:Riesz}) with $0 < 
\beta < 1$. 
In this case, the solution is a mean zero Gaussian random field that shares some 
similarity with the fractional Brownian sheet, but it seems to us that there is not a 
natural relation like \eqref{Eq:Walsh} between the solution and the fractional Brownian sheet. 
Also, by applying the computation in the proofs of Lemmas \ref{Lem2} and \ref{Lem3} below, 
one can verify that, when $0 < \beta < 1$, the solution $u(t, x)$ does not have the aforementioned 
properties (i) and (ii). Consequently, the method in Carmona and Nualart \cite{CN88} 
can not be applied. Our approach for studying the existence and propagation of singularities
of the solution $u(t, x)$ is based on a simultaneous LIL that is established by applying general 
methods for Gaussian processes. We believe that this approach can be applied in more general settings. 
For example, one may study the singularities of the solutions of linear stochastic wave equations driven 
by more general Gaussian noises or in two spatial dimensions. One may also consider non-vanishing 
initial data and study the effects of both initial data and the Gaussian noise on the existence and propagation 
of singularities. These extensions are non-trivial and go beyond the scope of the present paper. 

The rest of this article is organized as follows. First, we establish a simultaneous LIL in Propositions 
2.1 and 3.1 for the solution of the linear stochastic wave equation \eqref{SWE}. We prove that 
after a rotation, the LIL in one variable holds simultaneously for all values of the other variable. 
The proof consists of two parts. The upper bound for the LIL is proved in Section 2 and the 
corresponding lower bound is proved in Section 3. In Section 4, we define singularity for the solution 
and apply the simultaneous LIL to study the propagation of singularities. The main result Theorem 
\ref{Thm:sing} shows that singularities propagate along the characteristic lines.

It would be interesting to study the existence and propagation of singularities of the solutions of the nonlinear 
stochastic wave equations with a Gaussian noise which is white in time and colored in space (\cite{D99}) 
or stochastic wave equations with a multiplicative space-time homogeneous Gaussian noise (\cite{BQS19}). 
For solving these problems,  the martingale based methods of Carmona and Nualart \cite{CN88} are not 
applicable. We believe that the Gaussian method in this article, together with an appropriate approximation 
argument, would be useful.  However, we have not been able to find a quantitative bound on the  
approximation that is precise enough for our purpose. Some new argument seems to be needed.


\section{Simultaneous Law of Iterated Logarithm: Upper Bound}

The noise in \eqref{SWE} is defined as the mean zero Gaussian process $W(\varphi)$ indexed by compactly 
supported smooth functions $\varphi \in C^\infty_c(\R_+ \times \R)$ with covariance function
\begin{align}
\begin{aligned}\label{Eq:noise_cov}
\E[W(\varphi) W(\psi)]
& = \int_{\R_+} ds \int_\R dy \int_\R dy' \, \varphi(s, y) |y-y'|^{-\beta} \psi(s, y')\\
& = \frac{1}{2\pi}\int_{\R_+} ds \int_\R \mu(d\xi)\, \mathscr{F}(\varphi(s, \cdot))(\xi) \overline{\mathscr{F}(\psi(s, \cdot))(\xi)}
\end{aligned}
\end{align}
for all $\varphi, \psi \in C^\infty_c(\R_+ \times \R)$,
where $\mu$ is the measure whose Fourier transform is $|\cdot|^{-\beta}$
and $\mathscr{F}(\varphi(s, \cdot))(\xi)$ is the Fourier transform of the function $y \mapsto \varphi(s, y)$ in the 
following convention:
\[ \mathscr{F}(\varphi(s, \cdot))(\xi) = \int_\R e^{-i\xi y} \varphi(s, y) dy. 
\]
Note that $\mu(d\xi) = C_\beta |\xi|^{-1+\beta} d\xi$, where 
\[ C_\beta = \frac{\pi^{1/2} 2^{1-\beta}\Gamma(\frac{1}{2} - \frac{\beta}{2})}{\Gamma(\frac{\beta}{2})}, \]
see \cite[p.117]{S}. We assume that $W$ is defined on a complete probability space $(\Omega, \mathscr{F}, \P)$.

Following \cite{D99, DF98}, for any bounded Borel set $A$ in $\R_+ \times \R$, we can define 
\[ W(A) = \lim_{n \to \infty} W(\varphi_n) \]
in the sense of $L^2(\P)$-limit, where $(\varphi_n)$ is a sequence in $C^\infty_c(\R_+ \times \R)$ with a compact set $K$
such that $\mathrm{supp}\,\varphi_n \subset K$ for all $n$ and $\varphi_n \to {\bf 1}_A$.
From \eqref{Eq:noise_cov}, it follows that for any bounded Borel sets $A, B$ in $\R_+\times \R$, we have
\begin{align}
\begin{aligned}\label{Eq:noise_cov2}
\E[W(A) W(B)]
& = \int_{\R_+} ds \int_\R dy \int_\R dy' \, {\bf 1}_A(s, y) |y-y'|^{-\beta} {\bf 1}_B(s, y')\\
& = \frac{1}{2\pi} \int_{\R_+} ds \int_\R \frac{C_\beta d\xi}{|\xi|^{1-\beta}}\, \mathscr{F}({\bf 1}_A(s, \cdot))(\xi) 
\overline{\mathscr{F}({\bf 1}_B(s, \cdot))(\xi)}.
\end{aligned}
\end{align}

In dimension one, the fundamental solution of the wave equation is $\frac{1}{2}{\bf 1}_{\{ |x| \le t \}}$, so the mild
solution of \eqref{SWE} is 
\begin{equation}\label{SSWE} 
u(t, x) = \frac{1}{2} \int_0^t \int_\R {\bf 1}_{\{ |x-y| \le t-s \}}(s, y)\, W(ds\,dy) = \frac{1}{2} \, W(\Delta(t, x)), 
\end{equation}
where $\Delta(t, x) = \{(s, y) \in \R_+ \times \R : 0 \le s \le t,\, |x-y| \le t-s\}$. The set $\Delta(t, x)$ is indicated by 
the shaded region in Figure \ref{fig1}.

\begin{figure}
\begin{tikzpicture}
\path [fill=lightgray] (0,-2) -- (0,4) -- (3,1);
\draw [->] (0,-2.5) -- (0,4.6);
\draw [->] (0,0) -- (4,0);
\draw (0, -2) -- (0, 4) -- (3,1) -- (0,-2);
\draw [dashed] (3,0) node [below] {$t$} -- (3,1);
\draw [dashed] (0,1) node [left] {$x$} -- (3,1);
\draw [fill] (3,1) circle [radius=.05];
\draw [fill] (0,4) circle [radius=.05] node [left]{$x+t$};
\draw [fill] (0,-2) circle [radius=.05] node [left]{$x-t$};
\end{tikzpicture}
\caption{}\label{fig1}
\end{figure}

Consider a new coordinate system $(\tau, \lambda)$ obtained by rotating the $(t, x)$-coordinates by $-45^\circ$. 
In other words,
\[ (\tau, \lambda) = \Big(\frac{t-x}{\sqrt{2}}, \frac{t+x}{\sqrt{2}}\Big) \quad \text{and} \quad (t, x) 
= \Big( \frac{\tau+\lambda}{\sqrt{2}}, 
\frac{-\tau+\lambda}{\sqrt{2}} \Big). \]
For $\tau \ge 0$, $\lambda \ge 0$, let us denote 
\[\tilde{u}(\tau, \lambda) = u\Big(\frac{\tau+\lambda}{\sqrt{2}}, \frac{-\tau+\lambda}{\sqrt{2}}\Big).\]

We are going to prove a simultaneous LIL for the Gaussian random field $\{\tilde{u}(\tau, \lambda), \tau \ge 0, \lambda \ge 0 \}$. 
This is divided into Propositions 2.1 and 3.1, where an upper bound and the corresponding lower bound for the LIL in $\lambda$, 
which holds simultaneously for all values of $\tau$, are proved.  By using a symmetry argument, we can also prove that the LIL 
in $\tau$ holds simultaneously for all $\lambda$.

\begin{proposition}\label{Prop:sim_LIL_UB}
For any $\lambda > 0$, we have
\begin{equation} \label{Eq:UB1}
\P\left( \limsup_{h \to 0+} \frac{|\tilde{u}(\tau, \lambda + h) - \tilde{u}(\tau, \lambda)|}{\sqrt{(\tau+\lambda)h^{2-\beta}\log\log(1/h)}}
 \le K_\beta \textup{ for all } \tau \in [0, \infty) \right) = 1,
\end{equation}
where 
\begin{equation}\label{Eq:K} 
K_\beta = \bigg( \frac{2^{(1-\beta)/2}}{{(2-\beta)(1-\beta)}}\bigg)^{1/2}.
\end{equation}
\end{proposition}

In order to prove (\ref{Eq:UB1}), we will need several lemmas.

\begin{lemma}\label{Lem1}
For any $0 < \beta < 1$, $a < b$ and $c < d$, we have
\begin{equation}\label{Eq1}
C_\beta \int_{-\infty}^\infty |\mathscr{F}{\bf 1}_{[a, b]}(\xi)|^2 \frac{d\xi}{|\xi|^{1-\beta}} = \frac{4\pi}{(2-\beta)(1-\beta)}(b-a)^{2-\beta}
\end{equation}
and
\begin{equation}\label{Eq2}
C_\beta \int_{-\infty}^\infty \mathscr{F}{\bf 1}_{[a, b]}(\xi) \overline{\mathscr{F}{\bf 1}_{[c, d]}(\xi)} \frac{d\xi}{|\xi|^{1-\beta}} 
= \frac{2\pi}{(2-\beta)(1-\beta)}\Big( |c-b|^{2-\beta} + |d-a|^{2-\beta} - |c-a|^{2-\beta} - |d-b|^{2-\beta} \Big).
\end{equation}
\end{lemma}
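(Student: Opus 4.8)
The plan is to compute both integrals directly via the explicit Fourier transform of an indicator function and a careful evaluation of the resulting singular integral. First I would record that $\mathscr{F}{\bf 1}_{[a,b]}(\xi) = \int_a^b e^{-i\xi y}\,dy = \frac{e^{-ia\xi} - e^{-ib\xi}}{i\xi}$ for $\xi \neq 0$, so that
\[
|\mathscr{F}{\bf 1}_{[a,b]}(\xi)|^2 = \frac{|e^{-ia\xi} - e^{-ib\xi}|^2}{\xi^2} = \frac{2 - 2\cos((b-a)\xi)}{\xi^2} = \frac{4\sin^2((b-a)\xi/2)}{\xi^2}.
\]
Thus the left side of \eqref{Eq1} becomes $4 C_\beta \int_{-\infty}^\infty \frac{\sin^2((b-a)\xi/2)}{|\xi|^{3-\beta}}\,d\xi$. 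Substituting $\eta = (b-a)\xi$ (using $a<b$) pulls out a factor $(b-a)^{2-\beta}$, leaving the constant $4 C_\beta \cdot 2^{3-\beta}\int_0^\infty \frac{\sin^2(\eta/2)\cdot 2^{... }}{...}$ — more cleanly, after the substitution the integral equals $(b-a)^{2-\beta}\int_{-\infty}^\infty \frac{4\sin^2(\eta/2)}{|\eta|^{3-\beta}}\,d\eta$, and the remaining task is to verify that $C_\beta$ times this universal integral equals $\frac{4\pi}{(2-\beta)(1-\beta)}$.

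For that evaluation I would use the standard formula $\int_{-\infty}^\infty \frac{1 - \cos\eta}{|\eta|^{1+\gamma}}\,d\eta = \frac{\pi}{\Gamma(1+\gamma)\sin(\pi\gamma/2)}$ valid for $0 < \gamma < 2$, here with $\gamma = 2 - \beta$ (so $0 < \gamma < 2$ since $0 < \beta < 1$), after writing $4\sin^2(\eta/2) = 2(1 - \cos\eta)$. Then I would plug in the definition of $C_\beta = \frac{\pi^{1/2} 2^{1-\beta}\Gamma(\frac{1}{2} - \frac{\beta}{2})}{\Gamma(\frac{\beta}{2})}$ and reduce the product to $\frac{4\pi}{(2-\beta)(1-\beta)}$ using the reflection and duplication formulas for the Gamma function (to handle $\Gamma(1 + \gamma) = \Gamma(3-\beta) = (2-\beta)(1-\beta)\Gamma(1-\beta)$ and to match the $\sin(\pi\gamma/2) = \sin(\pi(2-\beta)/2) = \cos(\pi\beta/2)$ term against the half-integer Gamma factors). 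This Gamma-function bookkeeping is the only delicate point, and it is a finite check; I expect it to be the main (though routine) obstacle.

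For the second identity I would avoid repeating the analysis by using polarization. Writing ${\bf 1}_{[c,d]} - {\bf 1}_{[a,b]}$ and applying the (real, positive-definite) quadratic form $Q(\varphi) := C_\beta \int |\mathscr{F}\varphi(\xi)|^2 |\xi|^{-1+\beta}\,d\xi$ — which by \eqref{Eq:noise_cov2} equals $2\pi$ times the Riesz-kernel bilinear form $\iint \varphi(y)|y-y'|^{-\beta}\varphi(y')\,dy\,dy'$ — gives
\[
2\,\mathrm{Re}\, C_\beta\!\int \mathscr{F}{\bf 1}_{[a,b]}\overline{\mathscr{F}{\bf 1}_{[c,d]}}\,\frac{d\xi}{|\xi|^{1-\beta}} = Q({\bf 1}_{[a,b]}) + Q({\bf 1}_{[c,d]}) - Q({\bf 1}_{[a,b]} - {\bf 1}_{[c,d]}).
\]
The first two terms are given by \eqref{Eq1}, and ${\bf 1}_{[a,b]} - {\bf 1}_{[c,d]}$ is a signed combination of at most four indicators of intervals with endpoints among $a,b,c,d$; expanding $Q$ on it via the real-space Riesz form and the elementary identity $\int_I\int_J |y-y'|^{-\beta}\,dy\,dy' = \frac{1}{(2-\beta)(1-\beta)}\sum \pm |\text{endpoint differences}|^{2-\beta}$ (itself just \eqref{Eq1} and its polarization in real space, or a direct antiderivative computation) yields exactly the claimed combination $|c-b|^{2-\beta} + |d-a|^{2-\beta} - |c-a|^{2-\beta} - |d-b|^{2-\beta}$. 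Since the left-hand integral is real (the cross term in a real positive-definite form), the $\mathrm{Re}$ can be dropped, giving the stated formula.
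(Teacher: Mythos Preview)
Your argument for \eqref{Eq1} is essentially identical to the paper's: compute the Fourier transform, scale out $(b-a)^{2-\beta}$, evaluate the remaining universal integral $\int |e^{i\xi}-1|^2|\xi|^{\beta-3}\,d\xi$ via a standard formula, and match constants with Gamma-function identities. Your formula $\int_{\R}(1-\cos\eta)|\eta|^{-1-\gamma}\,d\eta = \pi/[\Gamma(1+\gamma)\sin(\pi\gamma/2)]$ with $\gamma=2-\beta$ agrees with the paper's citation of \cite{ST} after noting $\sin(\pi(2-\beta)/2)=\sin(\pi\beta/2)$ and $(2-\beta)\Gamma(2-\beta)=\Gamma(3-\beta)$.

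For the second identity you take a genuinely different route. The paper stays entirely on the Fourier side: it expands the product $\mathscr{F}{\bf 1}_{[a,b]}\overline{\mathscr{F}{\bf 1}_{[c,d]}}$ into four exponentials $e^{i\xi(c-a)}+e^{i\xi(d-b)}-e^{i\xi(c-b)}-e^{i\xi(d-a)}$, symmetrizes using reality of the integral, and then applies the algebraic identity $|e^{i\xi z}-1|^2 = 2 - e^{i\xi z} - e^{-i\xi z}$ to rewrite everything as a signed sum of four integrals of exactly the type already computed in \eqref{Eq1}. This reduces the cross term to Part~1 with no case analysis on the relative positions of $a,b,c,d$. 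Your polarization/real-space approach is also correct, but note that the polarization step is superfluous: once you invoke \eqref{Eq:noise_cov2} to write the bilinear form as $2\pi\int_a^b\int_c^d |y-y'|^{-\beta}\,dy'\,dy$, you can evaluate this double integral directly by two antidifferentiations (the antiderivative of $|z|^{-\beta}$ is $\frac{\mathrm{sgn}(z)}{1-\beta}|z|^{1-\beta}$), and the four endpoint terms $|c-b|^{2-\beta}+|d-a|^{2-\beta}-|c-a|^{2-\beta}-|d-b|^{2-\beta}$ fall out. Going through $Q({\bf 1}_{[a,b]}-{\bf 1}_{[c,d]})$ instead forces a case split on how the intervals overlap, which the paper's Fourier-side trick and the direct double-integral both avoid.
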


\begin{proof}
The Fourier transform of the function ${\bf 1}_{[a, b]}$ is
\[ \mathscr{F}{\bf 1}_{[a, b]}(\xi) = \frac{e^{-i\xi a} - e^{-i\xi b}}{i\xi}. \]
It follows that
\begin{align*}
C_\beta \int_{-\infty}^\infty |\mathscr{F}{\bf 1}_{[a, b]}(\xi)|^2 \frac{d\xi}{|\xi|^{1-\beta}} 
&= C_\beta \int_{-\infty}^\infty {|e^{i\xi(b-a)}-1|^2}\frac{d\xi}{|\xi|^{3-\beta}}\\
& = C_\beta (b-a)^{2-\beta} \int_{-\infty}^\infty {|e^{i\xi}-1|^2}\frac{d\xi}{|\xi|^{3-\beta}}.
\end{align*}
The last equality follows by scaling.
The proof of Proposition 7.2.8 of \cite{ST} shows that
\[ \int_{-\infty}^\infty |e^{i\xi}-1|^2 \frac{d\xi}{|\xi|^{3-\beta}} = \frac{2\pi}{(2-\beta) \Gamma(2-\beta) \sin(\frac{\pi\beta}{2})}. \]
Also, using the relations $\Gamma(2z) = 2^{2z-1} \pi^{-1/2} \Gamma(z) \Gamma(z+1)$, $\Gamma(z)\Gamma(1-z) 
= \pi/\sin(\pi z)$ and $z\Gamma(z) = \Gamma(z+1)$ (cf. \cite[p.895--896]{GR}), we can show that
\[ C_\beta = \frac{2 \Gamma(2-\beta) \sin(\frac{\pi\beta}{2})}{1-\beta}. \]
Hence \eqref{Eq1} follows.

For the second part,
\begin{align*}
\int_{-\infty}^\infty \mathscr{F}{\bf 1}_{[a, b]}(\xi) \overline{\mathscr{F}{\bf 1}_{[c, d]}(\xi)} \frac{d\xi}{|\xi|^{1-\beta}} 
= \int_{-\infty}^\infty \big(e^{i\xi (c-a)} + e^{i\xi (d-b)} - e^{i\xi (c-b)} - e^{i\xi(d-a)}\big) \frac{d\xi}{|\xi|^{3-\beta}}.
\end{align*}
Note that this integral is real, so we have
\begin{align*}
 \int_{-\infty}^\infty \mathscr{F}{\bf 1}_{[a, b]}(\xi) \overline{\mathscr{F}{\bf 1}_{[c, d]}(\xi)} \frac{d\xi}{|\xi|^{1-\beta}}
& = \frac{1}{2} \int_{-\infty}^\infty \big(e^{i\xi (c-a)} + e^{-i\xi (c-a)} + e^{i\xi (d-b)} + e^{-i\xi (d-b)}\\
& \qquad - e^{i\xi (c-b)} - e^{-i\xi (c-b)} - e^{i\xi(d-a)} - e^{-i\xi (d-a)}\big) \frac{d\xi}{|\xi|^{3-\beta}}.
\end{align*}
Since $|e^{i\xi(x-y)}-1|^2 = 2 - e^{i\xi(x-y)} - e^{-i\xi(x-y)}$ for all $x, y \in \R$, we have
\begin{align*}
& \int_{-\infty}^\infty \mathscr{F}{\bf 1}_{[a, b]}(\xi) \overline{\mathscr{F}{\bf 1}_{[c, d]}(\xi)} \frac{d\xi}{|\xi|^{1-\beta}}\\
& = \frac{1}{2} \int_{-\infty}^\infty \big( -|e^{i\xi (c-a)} - 1|^2 - |e^{i\xi (d-b)}-1|^2 + |e^{i\xi (c-b)} -1|^2 + |e^{i\xi(d-a)}-1|^2 \big)
\frac{d\xi}{|\xi|^{3-\beta}}.
\end{align*}
Hence the result (\ref{Eq2}) follows from the first part of the proof.
\end{proof}

The next two lemmas are  concerned with the variances of the increments of $\tilde u$ between two points and 
over an arbitrary rectangle, respectively. They will be used in the proofs of Propositions 2.1 and 3.1 below.

\begin{lemma}\label{Lem2}
For any $\tau, \lambda, h > 0$,
\[ \E[(\tilde{u}(\tau, \lambda+h) - \tilde{u}(\tau, \lambda))^2] =  \frac{1}{2}\,K_\beta^2 \left[(\tau+\lambda) h^{2-\beta} 
+ (3-\beta)^{-1} h^{3-\beta}\right], 
\]
where $K_\beta $ is the constant defined in (\ref{Eq:K}).
\end{lemma}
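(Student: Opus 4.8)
The plan is to express the increment $\tilde u(\tau,\lambda+h)-\tilde u(\tau,\lambda)$ as one half of the noise $W$ evaluated on a single planar region, and then compute its variance directly from the spatial Riesz-kernel form of the covariance \eqref{Eq:noise_cov2}. Writing $(t,x)=\big((\tau+\lambda)/\sqrt2,(\lambda-\tau)/\sqrt2\big)$ and $(t',x')=\big((\tau+\lambda+h)/\sqrt2,(\lambda+h-\tau)/\sqrt2\big)$, one has $t'-x'=t-x=\sqrt2\,\tau$ and $t'+x'=t+x+\sqrt2\,h=\sqrt2(\lambda+h)$. Since $\Delta(t,x)$ and $\Delta(t',x')$ are cut out by $s\ge 0$, $y-s\ge x-t$, and $y+s\le x+t$ (respectively $y+s\le x+t+\sqrt2\,h$), only the last constraint is relaxed, so $\Delta(t,x)\subset\Delta(t',x')$, and by \eqref{SSWE} we get $\tilde u(\tau,\lambda+h)-\tilde u(\tau,\lambda)=\tfrac12\,W(D)$ with $D:=\Delta(t',x')\setminus\Delta(t,x)=\{(s,y):s\ge0,\ y-s\ge x-t,\ x+t<y+s\le x+t+\sqrt2\,h\}$, a trapezoid (the big triangle minus the small one, the two sharing the base $\{s=0\}$ and the left characteristic edge).

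The main computation is then $\E[(\tilde u(\tau,\lambda+h)-\tilde u(\tau,\lambda))^2]=\tfrac14\,\E[W(D)^2]$ via the first line of \eqref{Eq:noise_cov2}. I would slice $D$ at fixed $s$: since $D$ is convex, the section $D_s$ is a single interval, and an elementary case analysis gives $|D_s|=\sqrt2\,h$ for $0\le s\le t$, $|D_s|=2(t'-s)$ for $t\le s\le t'$, and $D_s=\emptyset$ otherwise. Using the one-dimensional identity $\int_I\int_I|y-y'|^{-\beta}\,dy\,dy'=\frac{2}{(1-\beta)(2-\beta)}|I|^{2-\beta}$ for an interval $I$ (which is also the content of the first formula in Lemma \ref{Lem1} with $[c,d]=[a,b]$, combined with the Fourier identity in \eqref{Eq:noise_cov2}), one obtains
\[
\E[W(D)^2]=\frac{2}{(1-\beta)(2-\beta)}\int_0^{t'}|D_s|^{2-\beta}\,ds=\frac{2}{(1-\beta)(2-\beta)}\left[t\,(\sqrt2\,h)^{2-\beta}+2^{2-\beta}\frac{(t'-t)^{3-\beta}}{3-\beta}\right].
\]

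Finally I would substitute $t=(\tau+\lambda)/\sqrt2$ and $t'-t=h/\sqrt2$ and collect powers of $2$: each bracketed term carries a factor $2^{(1-\beta)/2}$, giving $\E[W(D)^2]=2K_\beta^2\big[(\tau+\lambda)h^{2-\beta}+(3-\beta)^{-1}h^{3-\beta}\big]$ with $K_\beta^2=2^{(1-\beta)/2}/((2-\beta)(1-\beta))$, and dividing by $4$ yields the claim. The only real difficulty is bookkeeping: correctly reading off the geometry of $D$ and of its horizontal slices, and tracking the $\sqrt2$ factors introduced by the $-45^\circ$ rotation; once these are in hand, the integration is routine.
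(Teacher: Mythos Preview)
Your proof is correct and follows essentially the same approach as the paper: both write the increment as $\tfrac12 W(D)$ with $D=\Delta(t',x')\setminus\Delta(t,x)$, slice $D$ at fixed $s$ into the same two regimes $[0,t]$ and $[t,t']$, and integrate. The only cosmetic difference is that the paper evaluates the inner integral via the Fourier representation in \eqref{Eq:noise_cov2} together with Lemma~\ref{Lem1}, whereas you use the equivalent real-space Riesz-kernel identity $\iint_{I\times I}|y-y'|^{-\beta}\,dy\,dy'=\tfrac{2}{(1-\beta)(2-\beta)}|I|^{2-\beta}$ directly---as you yourself note, these are the same formula on the two sides of Parseval.
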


\begin{proof}
Note that
\begin{align*}
 \E[(\tilde{u}(\tau, \lambda+h) - \tilde{u}(\tau, \lambda))^2]
& = \E\left[\Big({u}\Big(\frac{\tau+\lambda + h}{\sqrt{2}}, \frac{-\tau+\lambda+h}{\sqrt{2}}\Big) - {u}
\Big(\frac{\tau+\lambda}{\sqrt{2}}, \frac{-\tau+\lambda}{\sqrt{2}}\Big)\Big)^2\right]\\
& = \frac{1}{4} \, \E\left[\left(W\Big({\Delta\Big(\frac{\tau+\lambda + h}{\sqrt{2}}, \frac{-\tau+\lambda+h}
{\sqrt{2}}\Big) \big\backslash \Delta\Big(\frac{\tau+\lambda}{\sqrt{2}}, \frac{-\tau+\lambda}{\sqrt{2}}\Big)}\Big)\right)^2\right].
\end{align*}
Then by \eqref{Eq:noise_cov2} and Lemma \ref{Lem1},
\[
\begin{split}
& \E[(\tilde{u}(\tau, \lambda+h) - \tilde{u}(\tau, \lambda))^2]\\
& = \frac{1}{8\pi} \left\{ \int_0^{\frac{\tau+\lambda}{\sqrt{2}}} ds \int_{-\infty}^\infty \frac{C_\beta d\xi}{|\xi|^{1-\beta}} 
\big|\mathscr{F}{\bf 1}_{[\sqrt{2}\lambda -s, \sqrt{2}(\lambda + h) - s]}(\xi)\big|^2 \right. \\
& \qquad \qquad + \left.\int_{\frac{\tau+\lambda}{\sqrt{2}}}^{\frac{\tau+\lambda+h}{\sqrt{2}}} ds \int_{-\infty}^\infty 
\frac{C_\beta d\xi}{|\xi|^{1-\beta}}\big|\mathscr{F}{\bf 1}_{[-\sqrt{2}\tau + s, \sqrt{2}(\lambda + h) - s]}(\xi) \big|^2 \right\}\\
& = \frac{1}{2(2-\beta)(1-\beta)} \left\{\int_0^{\frac{\tau+\lambda}{\sqrt{2}}} \big(\sqrt{2} h\big)^{2-\beta} ds 
+ \int_{\frac{\tau+\lambda}{\sqrt{2}}}^{\frac{\tau+\lambda+h}{\sqrt{2}}} \big(\sqrt{2}(\tau+\lambda+h)-2s\big)^{2-\beta} ds \right\}\\
& = \frac{1}{2}\,K_\beta^2 \left[(\tau+\lambda) h^{2-\beta} + (3-\beta)^{-1} h^{3-\beta}\right].  
\end{split}
\]
This proves Lemma \ref{Lem2}.
\end{proof}

\begin{lemma}\label{Lem3}
Fix $\lambda \ge 0$. 
Then, for any $0 \le \tau \le \tau'$ and $0 \le h \le h'$,
\begin{equation}\label{Eq:rec-inc}
\begin{split}
&\E\big[(\tilde u(\tau', \lambda + h') - \tilde u(\tau', \lambda + h) - \tilde u(\tau, \lambda + h') + \tilde u(\tau, \lambda + h))^2\big]\\
& = \begin{cases}
\medskip
\frac 1 2 K_\beta^2 (h' - h)^{2-\beta} \big[ (\tau'-\tau) - \frac{1-\beta}{3-\beta} (h'-h)\big] & \text{if } h'-h \le \tau'-\tau,\\
\frac 1 2 K_\beta^2 (\tau' - \tau)^{2-\beta} \big[(h'-h) - \frac{1-\beta}{3-\beta} (\tau'-\tau)\big] & \text{if } h'-h > \tau'-\tau.
\end{cases}
\end{split}
\end{equation}
\end{lemma}

\begin{figure}
\begin{tikzpicture}
\draw [->] (0,-3.75) -- (0,4) node [above] {$x$};
\draw [->] (0,0) -- (6.8,0) node [right] {$t$};
\draw [->] (0,0) -- (3.6,3.6) node [above right] {$\lambda$};
\draw [->] (0,0) -- (3.7,-3.7) node [right] {};
\draw [thick,domain=5.5/sqrt(2):8/sqrt(2)] plot (\x, {8/sqrt(2)-\x});
\draw [dashed,domain=4/sqrt(2):5.5/sqrt(2)] plot (\x, {8/sqrt(2)-\x});
\draw [thick,domain=4/sqrt(2):6.5/sqrt(2)] plot (\x, {5/sqrt(2)-\x});
\draw [dashed,domain=2.5/sqrt(2):4/sqrt(2)] plot (\x, {5/sqrt(2)-\x});
\draw [thick,domain=6.5/sqrt(2):8/sqrt(2)] plot (\x, {-8/sqrt(2)+\x});
\draw [dashed,domain=4/sqrt(2):6.5/sqrt(2)] plot (\x, {-8/sqrt(2)+\x});
\draw [thick,domain=4/sqrt(2):5.5/sqrt(2)] plot (\x, {-3/sqrt(2)+\x});
\draw [dashed,domain=1.5/sqrt(2):4/sqrt(2)] plot (\x, {-3/sqrt(2)+\x});
\draw [thick,domain=-0.5/sqrt(2):2.5/sqrt(2)] plot ({5.5/sqrt(2)}, \x);
\draw [thick,domain=-1.5/sqrt(2):1.5/sqrt(2)] plot ({6.5/sqrt(2)}, \x);
\draw [fill] ({4/sqrt(2)} ,{4/sqrt(2)}) circle [radius=.05] 
node [xshift=-0.6cm, yshift=0.25cm]{$\lambda+h'$};
\draw [fill] ({2.5/sqrt(2)},{2.5/sqrt(2)}) circle [radius=.05] 
node [xshift=-0.6cm, yshift=0.25cm]{$\lambda+h$};
\draw [fill] ({1.5/sqrt(2)},{-1.5/sqrt(2)}) circle [radius=.05] node [below left]{$\tau$};
\draw [fill] ({4/sqrt(2)},{-4/sqrt(2)}) circle [radius=.05] node [below left]{$\tau'$};
\node at (3.55,0.75) {$Q_1$};
\node at (4.25,0.3) {$Q_2$};
\node at (5,-0.2) {$Q_3$};
\end{tikzpicture}
\caption{}\label{fig2}
\end{figure}

\begin{proof}
Note that 
\[\tilde u(\tau', \lambda + h') - \tilde u(\tau', \lambda + h) - \tilde u(\tau, \lambda + h') + \tilde u(\tau, \lambda + h) = \frac{1}{2} W(Q),\]
where $Q$ is the image of the rectangle $(\tau, \tau'] \times (\lambda+h, \lambda+h']$ under the rotation 
$(\tau, \lambda) \mapsto (\frac{\tau+\lambda}{\sqrt{2}}, \frac{-\tau+\lambda}{\sqrt{2}})$.
Suppose $h'-h \le \tau'-\tau$. Then $Q = Q_1 \cup Q_2 \cup Q_3$, where
\begin{align*}
Q_1 &= \Big\{ (t, x) : \frac{\tau+\lambda+h}{\sqrt{2}} < t \le \frac{\tau+\lambda+h'}{\sqrt{2}}, \sqrt{2}(\lambda+h)-s <x< -\sqrt{2}\tau+s \Big\},\\
Q_2 &= \Big\{ (t, x) : \frac{\tau+\lambda+h'}{\sqrt{2}} < t \le \frac{\tau'+\lambda+h}{\sqrt{2}}, \sqrt{2}(\lambda+h)-s <x\le \sqrt{2}(\lambda+h')-s \Big\},\\
Q_3 &= \Big\{ (t, x) : \frac{\tau'+\lambda+h}{\sqrt{2}} < t \le \frac{\tau'+\lambda+h'}{\sqrt{2}}, -\sqrt{2}\tau'+s \le x \le \sqrt{2}(\lambda+h')-s \Big\}.
\end{align*}
The sets $Q_1$, $Q_2$ and $Q_3$ are shown in Figure \ref{fig2}.
By \eqref{Eq:noise_cov2},
\begin{align*}
& \E \big[(\tilde u(\tau', \lambda + h') - \tilde u(\tau', \lambda + h) - \tilde u(\tau, \lambda + h') + \tilde u(\tau, \lambda + h))^2 \big]
 = \frac{1}{4}\, \E\big[W(Q)^2 \big]\\
& = \frac{1}{8\pi} \Bigg\{ \int_{\frac{\tau+\lambda+h}{\sqrt{2}}}^{\frac{\tau+\lambda+h'}{\sqrt{2}}} ds \int_{-\infty}^\infty
 \frac{C_\beta d\xi}{|\xi|^{1-\beta}} \big|\mathscr{F}{\bf 1}_{[\sqrt{2}(\lambda+h)-s, \, -\sqrt{2}\tau+s]}(\xi)\big|^2 \\
& \qquad + \int_{\frac{\tau+\lambda+h'}{\sqrt{2}}}^{\frac{\tau'+\lambda+h}{\sqrt{2}}} ds \int_{-\infty}^\infty \frac{C_\beta d\xi}
{|\xi|^{1-\beta}} \big|\mathscr{F}{\bf 1}_{[\sqrt{2}(\lambda+h)-s, \, \sqrt{2}(\lambda+h')-s]}(\xi) \big|^2\\
& \qquad + \int_{\frac{\tau'+\lambda+h}{\sqrt{2}}}^{\frac{\tau'+\lambda+h'}{\sqrt{2}}} ds \int_{-\infty}^\infty 
\frac{C_\beta d\xi}{|\xi|^{1-\beta}} \big|\mathscr{F}{\bf 1}_{[-\sqrt{2}\tau'+s, \, \sqrt{2}(\lambda+h')-s]}(\xi) \big|^2 \Bigg\}.
\end{align*}
Then, by Lemma \ref{Lem1},
\begin{align*}
& \E \big[(\tilde u(\tau', \lambda + h') - \tilde u(\tau', \lambda + h) - \tilde u(\tau, \lambda + h') + \tilde u(\tau, \lambda + h))^2 \big]\\
& = \frac{1}{2(2-\beta)(1-\beta)}\Bigg\{ \int_{\frac{\tau+\lambda+h}{\sqrt{2}}}^{\frac{\tau+\lambda+h'}{\sqrt{2}}} 
\big(2s - \sqrt{2}(\tau+\lambda+h)\big)^{2-\beta} ds\\
& \quad + \int_{\frac{\tau+\lambda+h'}{\sqrt{2}}}^{\frac{\tau'+\lambda+h}{\sqrt{2}}} \big(\sqrt{2}(h' - h) \big)^{2-\beta} ds
+ \int_{\frac{\tau'+\lambda+h}{\sqrt{2}}}^{\frac{\tau'+\lambda+h'}{\sqrt{2}}} \big( \sqrt{2}(\tau'+\lambda+h') - 2s \big)^{2-\beta} ds \Bigg\}\\
& = \frac{1}{2(2-\beta)(1-\beta)} \bigg\{ 2 \times \frac{2^{\frac{1-\beta}{2}}}{3-\beta} (h'-h)^{3-\beta} + 2^{\frac{1-\beta}{2}} 
(h' - h)^{2-\beta} \big[(\tau'-\tau) - (h' - h)\big] \bigg\}\\
& = \frac{1}{2}\, K_\beta^2 (h'-h)^{2-\beta} \Big\{ (\tau'-\tau) - \frac{1-\beta}{3-\beta}(h' - h)\big) \Big\}.
\end{align*}
This proves (\ref{Eq:rec-inc}) in the case of $h'-h \le \tau'-\tau$. The case of $h' - h > \tau' - \tau$ can be dealt with in a similar way. 
We omit the details.
\end{proof}


It is known that for establishing the law of the iterated logarithm for a Gaussian process at a fixed point, 
the following large deviation result  (cf.~\cite{LS70, MS72}) is useful:  If $\{Z(t), t \in T \}$ is a continuous 
centered Gaussian random field which is a.s.~bounded, then
\begin{align}\label{large_dev0}
\lim_{\gamma \to \infty} \frac{1}{\gamma^2} \log \P\left(\sup_{t \in T} Z(t) > \gamma\right) = -\frac{1}{2\sup_{t \in T}\E(Z(t)^2)}.
\end{align}
The result (\ref{large_dev0}), together with the scaling property of fractional Brownian sheets,  also allows 
Blath and Martin \cite{BM08}  to prove a simultaneous LIL and study the propagation of singularities of the 
semi-fractional Brownian sheet. However, it turns out that (\ref{large_dev0})  is not precise enough for proving 
Proposition \ref{Prop:sim_LIL_UB} due to the facts that the index set in the event $A_n$ in \eqref{Eq:An} depends 
on $n$ and $\tilde u$ does not have scaling property. To overcome this difficulty, we will make use of the 
following two results of Talagrand \cite[Theorem 2.4, Proposition 2.7]{T94} regarding tail probability estimates 
for general Gaussian processes in terms of metric entropy.

\begin{lemma}\label{Lem:T1}
Let $\{ Z(t), t \in T \}$ be a mean zero continuous Gaussian process and $\sigma_T^2 = \sup_{t \in T}\E[Z(t)^2]$.
Consider the canonical metric $d_Z$ on $T$ defined by  $d_Z(s, t) = \E[(Z(s) - Z(t))^2]^{1/2}$.
Assume that for some constant $M > \sigma_T$, some $\alpha > 0$ and 
some $0 < \eps_0 \le \sigma_T$,
\begin{equation*}
N(T, d_Z, \eps) \le \left( \frac{M}{\eps}\right)^\alpha \quad \text{for all } \eps < \eps_0,
\end{equation*}
where $N(T, d_Z, \eps)$ is the smallest number of $d_Z$-balls of radius $\varepsilon$ needed to cover $T$.
Then for any $\gamma > \sigma_T^2[(1+\sqrt{\alpha})/\eps_0]$, we have
\begin{equation}\label{Eq:T0}
\P\bigg\{ \sup_{t \in T} Z(t) \ge \gamma \bigg\} 
\le \left( \frac{KM\gamma}{\sqrt{\alpha}\,\sigma_T^2} \right)^\alpha
\Phi\left( \frac{\gamma}{\sigma_T} \right),
\end{equation}
where $\Phi(x) = (2\pi)^{-1/2} \int_x^\infty \exp(-z^2/2) \, dz$
and $K$ is a universal constant.
\end{lemma}

The conditions of Lemma \ref{Lem:T1} is easier to verify and the upper bound in (\ref{Eq:T0}) is more precise than 
that is given by \eqref{large_dev0} if $M/\sigma_T$ is not too large. However, the upper bound in (\ref{Eq:T0}) may 
not be useful when $M/\sigma_T$ becomes very large [this is the case for the even $A_n$ in \eqref{Eq:An} below.]
On the other hand, Lemma \ref{Lem:T} is more efficient if the variance of $ Z(t)$ attains its maximum 
at a unique point because the size of the set $T_\rho $ can be very small. 

\begin{lemma}\label{Lem:T}
Let $\{ Z(t), t \in T \}$ be a mean zero continuous Gaussian process as in Lemma \ref{Lem:T1}.
For $\rho > 0$, set
\[ T_\rho = \{ t \in T : \E[Z(t)^2] \ge \sigma_T^2 - \rho^2 \}. \]
Assume that there exist constants $v \ge w \ge 1$ such that for all $\rho > 0$, 
and $0 < \eps \le \rho(1+\sqrt{v})/\sqrt{w}$, we have
\[ N(T_\rho, d_Z, \eps) \le A \rho^w \eps^{-v}. \]
Then for any $\gamma > 2\sigma_T \sqrt{w}$, we have
\begin{equation*}
\P\bigg\{ \sup_{t \in T} Z(t) \ge \gamma \bigg\} 
\le \frac{Aw^{w/2}}{v^{v/2}} K^{v+w} \left( \frac{\gamma}{\sigma_T^2} \right)^{v-w}
\Phi\left(\frac{\gamma}{\sigma_T}\right).
\end{equation*}
\end{lemma}

Now, we prove Proposition \ref{Prop:sim_LIL_UB}.

\begin{proof}[Proof of Proposition \ref{Prop:sim_LIL_UB}]
Fix $\lambda > 0$.
It suffices to show that for any $0 \le a < b < \infty$ and any $0 < \varepsilon < 1$,
\begin{equation}\label{Eq:sim_LIL_UB}
\P\left( \limsup_{h \to 0+} \frac{|\tilde{u}(\tau, \lambda + h) - \tilde{u}(\tau, \lambda)|}{\sqrt{(\tau+\lambda) h^{2-\beta} \log\log(1/h)}} 
\le (1+\varepsilon)K_\beta \textup{ for all } \tau \in [a, b]\right) = 1.
\end{equation}
Let $c \in [a, b]$, $\delta = (c+\lambda)\varepsilon/2$ and $d = c+\delta$. 
Choose a real number $q$ such that $1 < q < (1+\varepsilon)^{1/(2-\beta)}$.
For every integer $n \ge 1$, consider the event
\begin{equation}\label{Eq:An}
A_n = \bigg\{ \sup_{\tau \in [c, d]} \sup_{h \in [0, q^{-n}]} \big|\tilde{u}(\tau, \lambda+h) - \tilde{u}(\tau, \lambda) \big| 
> \gamma_n \bigg\}, 
\end{equation}
where 
\[
\gamma_n = (1+\eps) K_\beta \sqrt{(c+\lambda)(q^{-n-1})^{2-\beta} \log\log q^n}.
\]
To estimate $\P(A_n) $, we will apply Lemma \ref{Lem:T}. Define $T = [c, d] \times [0, q^{-n}]$
and $Z(\tau, h) = \tilde u(\tau, \lambda+h) - \tilde u(\tau, \lambda)$
for $(\tau, h) \in T$. It follows from Lemma \ref{Lem2} that $\E\big[Z(\tau, h)^2\big]$ 
attains its unique maximum $\sigma^2_T$ at $(d, q^{-n})$, where
\[ \sigma^2_T = \frac 1 2 K_\beta^2 \big[(d+\lambda)q^{-n(2-\beta)} + (3-\beta)^{-1} q^{-n(3-\beta)}\big].\]

For any $(\tau, h), (\tau', h') \in T$, without loss of generality, we may assume that $\tau \le \tau'$. 
By the triangle inequality,
\begin{align*}
d_Z((\tau, h), (\tau', h'))
&\le \E\big[(Z(\tau, h) - Z(\tau, h'))^2\big]^{1/2} + \E\big[(Z(\tau', h') - Z(\tau, h'))^2\big]^{1/2}\\
& = \E\big[(\tilde u(\tau, \lambda+h) - \tilde u(\tau, \lambda+h'))^2\big]^{1/2}\\
& \quad + \E\big[(\tilde u(\tau', \lambda+h') - \tilde u(\tau', \lambda) - \tilde u(\tau, \lambda+h') + 
\tilde u(\tau, \lambda))^2\big]^{1/2}\\
& =: S_1 + S_2.
\end{align*}
By Lemma \ref{Lem2}, $S_1 \le C|h-h'|^{(2-\beta)/2}$.
The term $S_2$ can be evaluated  by using Lemma \ref{Lem3}.
In particular, if $|\tau-\tau'| \ge h'$, we have
\begin{align*}
S_2 \le \frac{1}{\sqrt 2} K_\beta\, q^{-n(2-\beta)/2} |\tau-\tau'|^{1/2}.
\end{align*}
If $|\tau-\tau'| < h'$, since $h' \le q^{-n}$ and $(2-\beta)/2 > 1/2$, we have
\begin{align*}
S_2 &\le \frac{1}{\sqrt 2} K_\beta |\tau-\tau'|^{(2-\beta)/2} (h')^{1/2}\\
& \le \frac{1}{\sqrt 2} K_\beta\, q^{-n(2-\beta)/2} |\tau-\tau'|^{1/2}.
\end{align*}
Hence,
\begin{equation}\label{d_Z}
d_Z((\tau, h), (\tau', h')) 
\le C(q^{-n(2-\beta)/2} |\tau-\tau'|^{1/2} + |h-h'|^{(2-\beta)/2}).
\end{equation}
Next, we estimate $N(T_\rho, d_Z, \eps)$, where
\[ T_\rho = \{ (\tau, h) \in T : \sigma^2_T - \E[Z(\tau, h)^2] \le \rho^2 \}.\]
We can write $\sigma^2_T - \E[Z(\tau, h)^2]$ as the sum of three terms:
\begin{align*}
\frac 1 2 K_\beta^2 \Big[ (d-\tau) q^{-n(2-\beta)} + (\tau +\lambda) (q^{-n(2-\beta)} - h^{2-\beta}) + (3-\beta)^{-1}(q^{-n(3-\beta)} - h^{3-\beta}) \Big].
\end{align*}
It follows that $(\tau, h) \in T_\rho$ implies that each of the three terms 
is at most $\rho^2$. Then, together with the elementary inequality 
$x^{1/(2-\beta)}- y^{1/(2-\beta)} \le (x-y)^{1/(2-\beta)}$
for $x \ge y > 0$ and $0 < 1/(2-\beta) < 1$, we can deduce that
\begin{align*}
T_\rho \subset [d - C_1 q^{n(2-\beta)} \rho^2, d] \times [q^{-n} - C_2 \rho^{2/(2-\beta)}, q^{-n}]
\end{align*}
for some constants $C_1$ and $C_2$. This and \eqref{d_Z} imply that
\begin{align*}
N(T_\rho, d_Z, \eps) \le C_0(\rho/\eps)^{2+\frac{2}{2-\beta}}.
\end{align*}
Hence, we apply Lemma \ref{Lem:T} with $v = w = 2 + \frac{2}{2-\beta}$
and the standard estimate $\Phi(x) \le (2\pi)^{-1/2} e^{-x^2/2}$ for $x \ge 1$
to get that
\begin{align*}
\P(A_n) &\le C \exp\Big(-\frac{\gamma_n^2}{2\sigma^2_T}\Big) 
= (n \log q)^{-p_n},
\end{align*}
where
\begin{align*}
p_n = \frac{(1+\eps)^2}{q^{2-\beta}\big[\frac{d+\lambda}{c+\lambda} + (3-\beta)^{-1}(c+\lambda)^{-1} q^{-n}\big]}.
\end{align*}
Recall that $d =c+\delta$ and $\delta = (c+\lambda)\varepsilon/2$. 
When $n$ is large enough, 
$(3-\beta)^{-1}(c+\lambda)^{-1} q^{-n} \le \varepsilon/2$, 
which implies that
\[ p_n \ge \frac{1+\varepsilon}{q^{2-\beta}} > 1. \]
Hence $\sum_{n=1}^\infty \P(A_n) < \infty$ and 
by the Borel-Cantelli lemma, we have
$\P(A_n \textup{ i.o.}) = 0$.
It follows that with probability 1,
\[ \sup_{\tau \in [c, d]} \sup_{h \in [q^{-n-1}, q^{-n}]} \frac{|\tilde{u}(\tau, \lambda+h) - \tilde{u}(\tau, \lambda)|}
{\sqrt{(c+\lambda)(q^{-n-1})^{2-\beta}\log\log q^n}} \le (1+\varepsilon) K_\beta 
\] 
eventually for all large $n$. Hence
\[ \P\left( \limsup_{h \to 0+}\frac{|\tilde{u}(\tau, \lambda+h) - \tilde{u}(\tau, \lambda)|}{\sqrt{(\tau+\lambda)h^{2-\beta} \log\log (1/h)}} 
\le (1+\varepsilon) K_\beta \textup{ for all } \tau \in [c, d]\right) = 1. \]
From this, we can deduce \eqref{Eq:sim_LIL_UB} by covering the interval $[a, b]$ by finitely many intervals $[c, d]$ of length $\delta$.
\end{proof}

\section{Simultaneous Law of Iterated Logarithm: Lower Bound}

Next, we prove the lower bound for the simultaneous LIL:

\begin{proposition}\label{Prop:sim_LIL_LB}
For any $\lambda > 0$,
\begin{equation}\label{Eq:sim_LIL_LB}
\P\left( \limsup_{h \to 0+} \frac{|\tilde{u}(\tau, \lambda+h) - \tilde{u}(\tau, \lambda)|}{\sqrt{(\tau+\lambda)h^{2-\beta} \log\log(1/h)}} 
\ge K_\beta \textup{ for all } \tau \in [0, \infty) \right) = 1,
\end{equation}
where $K_\beta$ is the constant in (\ref{Eq:K}) of Proposition \ref{Prop:sim_LIL_UB}. 
\end{proposition}

\smallskip
Recall the following version of Borel-Cantelli lemma \cite[p.391]{R}.

\begin{lemma}\label{BClemma}
Let $\{A_n : n \ge 1\}$ be a sequence of events. If
\begin{enumerate}
\item[(i)] $\sum_{n=1}^\infty \P(A_n) = \infty$ and
\item[(ii)] $\displaystyle \liminf_{n \to \infty} \frac{\sum_{j=1}^n \sum_{k=1}^n \P(A_j \cap A_k)}{[\sum_{j=1}^n \P(A_j)]^2} = 1$,
\end{enumerate}
then $\P(A_n \textup{ i.o.}) = 1$.
\end{lemma}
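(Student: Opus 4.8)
The plan is to prove this by the second-moment method; this is the Kochen--Stone (or Erd\H{o}s--R\'enyi) sharpening of the second Borel--Cantelli lemma. For $n \ge 1$ set $S_n = \sum_{j=1}^n \mathbf{1}_{A_j}$, so that $\E[S_n] = \sum_{j=1}^n \P(A_j) =: b_n$ and $\E[S_n^2] = \sum_{j=1}^n\sum_{k=1}^n \P(A_j \cap A_k) =: c_n$, where $b_n \to \infty$ by (i). The key elementary step is to apply the Cauchy--Schwarz inequality to $S_n = S_n\,\mathbf{1}_{\{S_n > 0\}}$:
\[
b_n = \E\big[S_n\,\mathbf{1}_{\{S_n>0\}}\big] \le \big(\E[S_n^2]\big)^{1/2}\,\P(S_n>0)^{1/2} = c_n^{1/2}\,\P(S_n > 0)^{1/2},
\]
which gives $\P(S_n > 0) \ge b_n^2/c_n$. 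I would also note that Cauchy--Schwarz gives $c_n = \E[S_n^2] \ge (\E[S_n])^2 = b_n^2$ for every $n$, so the ratio in (ii) is always at least $1$, and hypothesis (ii) says precisely that it returns to $1$ along a subsequence.

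Next I would pass to the limit. Since $\{S_n > 0\} = \bigcup_{j=1}^n A_j$ is increasing in $n$, we have $\P(S_n > 0) \uparrow \P\big(\bigcup_{j\ge 1} A_j\big)$, and therefore
\[
\P\Big(\bigcup_{j\ge 1} A_j\Big) = \lim_{n\to\infty}\P(S_n > 0) \ge \limsup_{n\to\infty} \frac{b_n^2}{c_n} = \Big(\liminf_{n\to\infty}\frac{c_n}{b_n^2}\Big)^{-1} = 1
\]
by (ii); hence $\P\big(\bigcup_{j\ge 1} A_j\big) = 1$, i.e.\ at least one $A_j$ occurs almost surely.

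To upgrade this to ``$A_n$ infinitely often'', I would run the same argument on the tail sequence $\{A_j : j \ge m\}$ for each fixed $m$. With $S_n^{(m)} = \sum_{j=m}^n \mathbf{1}_{A_j}$, $b_n^{(m)} = b_n - b_{m-1}$, and $c_n^{(m)} = \sum_{j=m}^n\sum_{k=m}^n \P(A_j\cap A_k) \le c_n$, the same Cauchy--Schwarz bound yields $\P(S_n^{(m)} > 0) \ge (b_n^{(m)})^2/c_n^{(m)}$. Since $b_n \to \infty$, we have $b_n^{(m)}/b_n \to 1$ as $n \to \infty$, so
\[
\liminf_{n\to\infty}\frac{c_n^{(m)}}{(b_n^{(m)})^2} \le \liminf_{n\to\infty}\Big(\frac{c_n}{b_n^2}\cdot\frac{b_n^2}{(b_n^{(m)})^2}\Big) = 1,
\]
and this $\liminf$ is $\ge 1$ by Cauchy--Schwarz, hence equals $1$. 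Exactly as before, $\P\big(\bigcup_{j\ge m} A_j\big) = \lim_{n}\P(S_n^{(m)} > 0) = 1$ for every $m$. Finally, the events $\bigcup_{j\ge m} A_j$ decrease in $m$ and each has probability $1$, so
\[
\P(A_n \text{ i.o.}) = \P\Big(\bigcap_{m\ge 1}\bigcup_{j\ge m} A_j\Big) = \lim_{m\to\infty}\P\Big(\bigcup_{j\ge m} A_j\Big) = 1.
\]
There is no genuine obstacle here; the only point requiring a little care is the passage from ``some $A_j$ occurs'' to ``infinitely many occur'', which is handled by the observation that discarding the first $m-1$ terms changes neither the divergence in (i) nor the $\liminf$ in (ii), precisely because $b_n \to \infty$.
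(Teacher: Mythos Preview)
Your proof is correct: it is the standard second-moment (Paley--Zygmund/Kochen--Stone) argument, and the passage to the tail sequence $\{A_j : j \ge m\}$ is handled cleanly using $c_n^{(m)} \le c_n$ together with $b_n^{(m)}/b_n \to 1$. The paper does not actually prove this lemma; it simply recalls it from R\'enyi's textbook \cite[p.~391]{R}, so your write-up supplies what the paper omits by citation.
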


We will also use the following lemma, which is essentially proved in \cite{S62}. For the sake of completeness, 
we provide a proof for this result.

\begin{lemma}\label{slepian}
Let $Z_1$ and $Z_2$ be jointly Gaussian random variables with $\E (Z_i) = 0$, $\E (Z_i^2) = 1$ and $\E (Z_1Z_2) = r$. 
Then for any $\gamma_1, \gamma_2 > 0$, there exists a number $r^*$ between $0$ and $r$ such that
\[ 
\P(Z_1 > \gamma_1, Z_2 > \gamma_2) - \P(Z_1 > \gamma_1)\P(Z_2 > \gamma_2) = r g(\gamma_1, \gamma_2; r^*),
 \]
where $g(x, y; r)$ is the standard bivariate Gaussian density with correlation $r$, i.e.
\[ g(x, y; r) = \frac{1}{2\pi(1-r^2)^{1/2}} \exp\left(-\frac{x^2 + y^2 - 2r xy}{2(1-r^2)}\right). \]
\end{lemma}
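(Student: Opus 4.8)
The plan is to prove the identity by differentiating the joint probability with respect to the correlation parameter and applying the mean value theorem. First I would introduce the function
\[
\Phi(r) = \P(Z_1 > \gamma_1, Z_2 > \gamma_2) = \int_{\gamma_1}^\infty \int_{\gamma_2}^\infty g(x, y; r)\, dx\, dy,
\]
where now $(Z_1, Z_2)$ is taken to be the standard bivariate Gaussian vector with correlation $r$, and $g(x, y; r)$ is as in the statement. Note that $\Phi(0) = \P(Z_1 > \gamma_1)\P(Z_2 > \gamma_2)$ since at $r = 0$ the density factorizes, so the left-hand side of the claimed identity is exactly $\Phi(r) - \Phi(0)$.

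The key step is the classical observation (Price's theorem / Plackett's identity) that the bivariate Gaussian density satisfies the heat-type relation
\[
\frac{\partial}{\partial r} g(x, y; r) = \frac{\partial^2}{\partial x\, \partial y} g(x, y; r).
\]
I would verify this by a direct computation from the explicit formula for $g(x, y; r)$: differentiate the exponent $-\frac{x^2 + y^2 - 2rxy}{2(1-r^2)}$ and the prefactor $(1-r^2)^{-1/2}$ with respect to $r$, and separately compute the mixed second partial in $x$ and $y$, then check the two expressions coincide. Granting this, differentiate under the integral sign:
\[
\Phi'(r) = \int_{\gamma_1}^\infty \int_{\gamma_2}^\infty \frac{\partial^2}{\partial x\, \partial y} g(x, y; r)\, dx\, dy
= g(\gamma_1, \gamma_2; r),
\]
where the last equality uses the fundamental theorem of calculus in both variables together with the fact that $g(x, y; r) \to 0$ as $x \to \infty$ or $y \to \infty$. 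Finally, by the mean value theorem applied to $\Phi$ on the interval between $0$ and $r$, there is a point $r^*$ between $0$ and $r$ with $\Phi(r) - \Phi(0) = r\,\Phi'(r^*) = r\, g(\gamma_1, \gamma_2; r^*)$, which is the assertion.

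I expect the main obstacle to be purely technical: justifying the differentiation under the integral sign (a dominated-convergence argument, uniform in $r$ on compact subsets of $(-1, 1)$, using the Gaussian tails of $g$ and its $r$-derivative) and verifying the identity $\partial_r g = \partial_x \partial_y g$ cleanly. Care is also needed when $r = \pm 1$, but since the lemma is only applied with $|r| < 1$ (the covariance of distinct increments is strictly less than the product of standard deviations by Cauchy--Schwarz, or one handles $r = 0$ trivially), restricting to $|r| < 1$ is harmless; if $r = 0$ the identity holds with $r g = 0$ on both sides. Everything else is a routine computation with the explicit Gaussian density.
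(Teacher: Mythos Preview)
Your proposal is correct and follows essentially the same route as the paper: define the joint tail probability as a function of $r$, use the identity $\partial_r g = \partial_x\partial_y g$ to compute its derivative as $g(\gamma_1,\gamma_2;r)$, and conclude by the mean value theorem. The only cosmetic difference is that the paper verifies $\partial_r g = \partial_x\partial_y g$ via the Fourier transform of $g$ (where it is immediate, since $\partial_r$ brings down $-\xi\zeta$ from the exponent), whereas you propose a direct computation from the density formula; both work.
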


\begin{proof}
Let $\gamma_1, \gamma_2 > 0$ and $p(r) = \int_{\gamma_1}^\infty \int_{\gamma_2}^\infty g(x, y; r) \, dx\, dy$.
Define the Fourier transform of a function $f(x, y)$ as $\mathscr{F}f(\xi, \zeta) = \iint_{\mathbb{R}^2} e^{-i(x\xi + y\zeta)} f(x, y) \, dx\, dy$.
Note that
\[ g(x, y; r) = \frac{1}{(2\pi)^2} \iint_{\mathbb{R}^2} e^{i(x\xi + y \zeta)} [\mathscr{F}g(\ast\, ;r)](\xi, \zeta) \, d\xi \, d\zeta \]
and
\[ [\mathscr{F}g(\ast\, ;r)](\xi, \zeta) = e^{-\frac{1}{2}(\xi^2 +2r\xi \zeta + \zeta^2)}. \]
By the dominated convergence theorem,
\begin{align*}
\partial_r g(x, y; r) &= \frac{-1}{(2\pi)^2} \iint_{\mathbb{R}^2} e^{i(x\xi + y \zeta)} \xi \zeta [\mathscr{F}g(\ast\, ;r)](\xi, \zeta) \, d\xi \, d\zeta.
\end{align*}
Since $(i\xi)(i\zeta) \cdot \mathscr{F}f(\xi, \zeta) = [\mathscr{F}\partial_x \partial_y f](\xi, \zeta)$, we have
\begin{align*}
\partial_r g(x, y; r) = \frac{1}{(2\pi)^2} \iint_{\mathbb{R}^2} e^{i(x\xi + y \zeta)} [\mathscr{F}\partial_x \partial_y g(\ast; r)](\xi, \zeta) \, 
d\xi \, d\zeta = \partial_x \partial_y g(x, y; r).
\end{align*}
Therefore,
\[ \partial_r p = \int_{\gamma_1}^\infty \int_{\gamma_2}^\infty \partial_x \partial_y g(x, y; r)\, dx\, dy = g(\gamma_1, \gamma_2; r). \]
The mean value theorem implies that $p(r) - p(0) = r g(\gamma_1, \gamma_2; r^*)$ for some $r^*$ between $0$ and $r$, 
and hence the result.
\end{proof}

Let $\sigma$ and $\tilde{\sigma}$ be the canonical metric on $\R_+\times \R$ for $u$ and $\tilde{u}$ respectively, i.e.,
\[ {\sigma}[(t, x), (t', x')] = \E[({u}(t, x) - {u}(t',x'))^2]^{1/2}, \]
\[ \tilde{\sigma}[(\tau, \lambda), (\tau', \lambda')] = \E[(\tilde{u}(\tau, \lambda) - \tilde{u}(\tau',\lambda'))^2]^{1/2}. \]
For a rectangle $I = [a,a'] \times [-b, b]$, where $0 < a < a'< \infty$ and $0 < b < \infty$,
recall from  \cite[Proposition 4.1]{DS10} that there exist positive finite constants $C_1$ and $C_2$ such that
\begin{equation}\label{Eq:incre_SWE}
C_1 \big(|t-t'| + |x-x'|\big)^{(2-\beta)/2} \le \sigma[(t, x), (t',x')]
\le C_2 \big(|t-t'|+ |x-x'|\big)^{(2-\beta)/2}
\end{equation}
for all $(t, x), (t',x') \in I$.

The proof of the following lemma is based on the method in \cite{O84, O89}.

\begin{lemma}\label{Lem:large_dev}
Let $\tau > 0$, $\lambda > 0$ and $q > 1$. Then for all $0 < \varepsilon < 1$,
\begin{equation}
\P\left( \frac{\tilde{u}(\tau, \lambda + q^{-n}) - \tilde{u}(\tau, \lambda + q^{-n-1})}{\tilde\sigma[(\tau, \lambda + 
q^{-n}), (\tau, \lambda+q^{-n-1})]} \ge (1-\varepsilon)\sqrt{2\log\log q^n} \textup{ infinitely often in }n \right) = 1.
\end{equation}
\end{lemma}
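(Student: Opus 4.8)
The plan is to apply the second Borel--Cantelli lemma (Lemma \ref{BClemma}) to the events
\[
A_n = \left\{ \frac{\tilde{u}(\tau, \lambda + q^{-n}) - \tilde{u}(\tau, \lambda + q^{-n-1})}{\tilde\sigma[(\tau, \lambda + q^{-n}), (\tau, \lambda+q^{-n-1})]} \ge (1-\varepsilon)\sqrt{2\log\log q^n} \right\}.
\]
Write $\eta_n$ for the normalized increment appearing in $A_n$, so each $\eta_n$ is a standard Gaussian variable and $A_n = \{\eta_n \ge \gamma_n\}$ with $\gamma_n = (1-\varepsilon)\sqrt{2\log\log q^n}$. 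First I would use the standard Gaussian tail bound $\P(\eta_n \ge \gamma_n) \ge c\,\gamma_n^{-1} e^{-\gamma_n^2/2}$ to see that $\P(A_n)$ decays like $(\log q^n)^{-(1-\varepsilon)^2}$ up to a logarithmic factor, which is not summable; hence condition (i) holds. This is the easy half.

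The main work is verifying condition (ii), i.e. that the pairwise correlations of the $A_n$ are weak enough that $\sum_{j,k \le n}\P(A_j\cap A_k)$ is asymptotically $[\sum_{j\le n}\P(A_j)]^2$. The key step is to estimate $r_{jk} := \E[\eta_j \eta_k]$ for $j < k$. The increments $\tilde{u}(\tau,\lambda+q^{-j}) - \tilde{u}(\tau,\lambda+q^{-j-1})$ and $\tilde{u}(\tau,\lambda+q^{-k}) - \tilde{u}(\tau,\lambda+q^{-k-1})$ correspond, via \eqref{SSWE} and the geometry of the triangles $\Delta(\cdot,\cdot)$, to the Gaussian noise $W$ evaluated on two thin ``strips'' that are spatially separated (the $k$-th strip sits at distance of order $q^{-j}$ from the $j$-th, while the strip widths are of order $q^{-j-1}$ and $q^{-k-1}$). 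Using \eqref{Eq:noise_cov2} and Lemma \ref{Lem1} — which gives the covariance of $W$ on intervals in closed form in terms of the function $r\mapsto r^{2-\beta}$ — together with the convexity/smoothness of $r\mapsto r^{2-\beta}$, I expect to obtain a bound of the form $|r_{jk}| \le C\, q^{-(k-j)}$ for some constant $C$ (or at worst $C q^{-\rho(k-j)}$ for some $\rho>0$), so that the correlations decay geometrically in $k-j$. This geometric-decay estimate is the crux of the argument and the step I expect to be the main obstacle, since it requires carefully identifying the region in the $(\tau,\lambda)$-plane (equivalently the $(s,y)$-plane) on which the two relevant noise slivers live and bounding the difference-of-powers expression from Lemma \ref{Lem1}.

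Given the bound $|r_{jk}| \le C q^{-(k-j)}$, I would then invoke Lemma \ref{slepian}: for $j<k$,
\[
\P(A_j \cap A_k) - \P(A_j)\P(A_k) = r_{jk}\, g(\gamma_j, \gamma_k; r_{jk}^*)
\]
for some $r_{jk}^*$ between $0$ and $r_{jk}$. Since $|r_{jk}^*| \le |r_{jk}| \le Cq^{-(k-j)}$ is bounded away from $1$, the density $g(\gamma_j,\gamma_k;r_{jk}^*)$ is at most $C' \exp(-\tfrac{1}{2}(\gamma_j^2+\gamma_k^2)(1-\delta_{jk}))$ where the correction $\delta_{jk}$ coming from the $2r^* xy$ term and the $(1-r^{*2})^{-1}$ factor is controlled by $|r_{jk}^*|$; after absorbing logarithmic factors this yields $\P(A_j\cap A_k) - \P(A_j)\P(A_k) \le C'' q^{-(k-j)}\,\P(A_j)\P(A_k)\cdot(\text{lower order})$, and more crudely $\sum_{j<k, k\le n}[\P(A_j\cap A_k)-\P(A_j)\P(A_k)] = o\big([\sum_{j\le n}\P(A_j)]^2\big)$ because the geometric factor $q^{-(k-j)}$ is summable over $k-j$ while $\sum_{j\le n}\P(A_j)\to\infty$. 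Combined with the diagonal terms $\sum_j \P(A_j) = o([\sum_j\P(A_j)]^2)$, this gives condition (ii), and Lemma \ref{BClemma} then yields $\P(A_n \text{ i.o.}) = 1$, which is the assertion of the lemma. Throughout, the increments \eqref{Eq:incre_SWE} and Lemma \ref{Lem2} are used only to pin down the normalizing factors $\tilde\sigma[\cdots]$ to the correct order $q^{-n(2-\beta)/2}$, which is what makes $\gamma_n$ comparable to $(1-\varepsilon)\sqrt{2\log\log q^n}$ after normalization.
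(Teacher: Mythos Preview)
Your overall plan matches the paper's: same events $A_n$, same use of the Gaussian tail for condition (i), same correlation computation via \eqref{Eq:noise_cov2} and Lemma~\ref{Lem1}, and the same appeal to Lemma~\ref{slepian} for condition (ii). The paper obtains $0\le r_{jk}\le C_0\,q^{-\beta(k-j)/2}$ (so your ``at worst $Cq^{-\rho(k-j)}$'' is right, with $\rho=\beta/2$).

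The gap is in your final estimate. The inequality
\[
\P(A_j\cap A_k)-\P(A_j)\P(A_k)\ \le\ C''\,q^{-(k-j)}\,\P(A_j)\P(A_k)\cdot(\text{lower order})
\]
simply does not hold when $k-j$ is bounded. In that regime $r_{jk}$ is a fixed positive constant, and the cross term $2r^*\gamma_j\gamma_k$ in $g(\gamma_j,\gamma_k;r^*)$ contributes a factor of order $\exp\bigl(\tfrac{r^*}{1+r^*}\gamma_k^2\bigr)\asymp(\log k)^{c}$ with $c>0$; this is not ``lower order'' relative to $\P(A_j)\P(A_k)$ --- it is a genuine power of $\log k$ that swamps $\P(A_k)$. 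So summing the geometric factor $q^{-(k-j)}$ over $k-j$ does not by itself give the $o\bigl([\sum\P(A_j)]^2\bigr)$ you claim.

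The paper repairs this by splitting the off-diagonal sum at a moving threshold $\rho_k\sim\log\gamma_k$: for $j\le k-\rho_k$ one has $r_{jk}\le C_0\gamma_k^{-4}$, and your intended estimate does work, giving $\le\delta[\sum\P(A_j)]^2$; for $k-\rho_k\le j\le k-l$ one only uses $r_{jk}^*\le r<1$ and bounds $r_{jk}\,g(\gamma_j,\gamma_k;r_{jk}^*)\le C\,\gamma_k\exp\bigl(-\tfrac{1-r}{2(1+r)}\gamma_k^2\bigr)\P(A_j)$, then exploits that there are only $O(\rho_k)=O(\log\gamma_k)$ such $j$ per $k$ to get a contribution that is $O\bigl(\sum\P(A_j)\bigr)=o\bigl([\sum\P(A_j)]^2\bigr)$. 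You need this two-range split (or an equivalent device) to close the argument.
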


\begin{proof}
For $n \ge 1$, let $A_n = \{Z_n > \gamma_n\}$, where
\[ Z_n = \frac{\tilde{u}(\tau, \lambda + q^{-n}) - \tilde{u}(\tau, \lambda+ q^{-n-1})}{\tilde\sigma[(\tau, \lambda 
+ q^{-n}), (\tau, \lambda + q^{-n-1})]} 
\]
and
\[ \gamma_n = (1-\varepsilon) \sqrt{2 \log\log q^n}. \]
We will complete the proof by showing that (i) and (ii) of Lemma \ref{BClemma} are satisfied. 
For (i), by using the standard estimate
\begin{align}\label{tail_est}
 \P(Z > x) \ge (2\sqrt{2\pi})^{-1} x^{-1} \exp(-x^2/2), \quad x > 1, 
\end{align}
for a standard Gaussian random variable $Z$, we derive that for large $n$,
\begin{align*}
\P(Z_n > \gamma_n) \ge \frac{C}{n^{(1-\varepsilon)^2}\sqrt{\log n}}
\end{align*}
and hence $\sum_{n=1}^\infty \P(A_n) = \infty$.

Next, we show that (ii) is satisfied. Since 
$$\sum_{j=1}^n \sum_{k=1}^n \big[\P(A_j \cap A_k) - \P(A_j)\P(A_k)\big] = 
\E\Bigg[\bigg(\sum_{j=1}^n \big({\bf 1}_{A_j} -  \P(A_j)\big)\bigg)^2\Bigg] \ge 0$$ 
and $\sum_{n=1}^\infty \P(A_n) = \infty$, it is enough to prove that
\begin{equation}\label{ii'}
\liminf_{n \to \infty} \frac{\sum_{1 \le j < k \le n} [\P(A_j \cap A_k) - \P(A_j) \P(A_k)]}{[\sum_{j=1}^n \P(A_j)]^2} \le 0.
\end{equation}
We are going to use Lemma \ref{slepian} to estimate the difference in the numerator.  First, we 
estimate the correlation $r_{jk}$ between $Z_j$ and $Z_k$ for $j < k$.
Let $(t_n, x_n) = (\frac{\tau+\lambda+q^{-n}}{\sqrt{2}}, \frac{-\tau+\lambda+q^{-n}}{\sqrt{2}})$.
By \eqref{Eq:noise_cov2}, we have
\begin{align*}
&\E \big[(\tilde{u}(\tau, \lambda+q^{-j}) - \tilde{u}(\tau, \lambda+q^{-j-1}))(\tilde{u}(\tau, \lambda + q^{-k}) - \tilde{u}(\tau, \lambda+q^{-k-1}))\big]\\
& = \frac{1}{4} \, \E\big[W\big(\Delta(t_j, x_j)\backslash\Delta(t_{j+1}, x_{j+1})\big)W\big(\Delta(t_k, x_k) \backslash \Delta(t_{k+1}, x_{k+1})\big)\big]\\
& = \frac{1}{8\pi} \int_0^\infty ds \int_{-\infty}^\infty \frac{C_\beta d\xi}{|\xi|^{1-\beta}} \mathscr{F}({\bf 1}_{\Delta(t_j, x_j)\backslash\Delta(t_{j+1}, x_{j+1})}(s, \cdot))(\xi) \overline{\mathscr{F}({\bf 1}_{\Delta(t_k, x_k) \backslash \Delta(t_{k+1}, x_{k+1})}(s, \cdot))(\xi)} \\
& = \frac{1}{8\pi} \int_0^{t_{k+1}} ds \int_{-\infty}^\infty \frac{C_\beta d\xi}{|\xi|^{1-\beta}} \mathscr{F}({\bf 1}_{[x_{k+1}+t_{k+1}-s, \,x_k+t_k-s]})(\xi) \overline{\mathscr{F}({\bf 1}_{[x_{j+1}+t_{j+1}-s, \, x_j+t_j-s]})(\xi)}\\
& \quad + \frac{1}{8\pi} \int_{t_{k+1}}^{t_k} ds \int_{-\infty}^\infty \frac{C_\beta d\xi}{|\xi|^{1-\beta}} \mathscr{F}({\bf 1}_{[x_k-t_k+s, \, x_k+t_k-s]})(\xi) \overline{\mathscr{F}({\bf 1}_{[x_{j+1}+t_{j+1}-s, \, x_j+t_j-s]})(\xi)}.
\end{align*}
Note that \eqref{Eq:noise_cov2} also implies that this covariance is nonnegative.
Then by Lemma \ref{Lem1},
\begin{align}\label{corr}
\notag &\E\big[(\tilde{u}(\tau, \lambda+q^{-j}) - \tilde{u}(\tau, \lambda+q^{-j-1}))(\tilde{u}(\tau, \lambda + q^{-k}) - \tilde{u}(\tau, \lambda+q^{-k-1}))\big]\\
\notag & = C t_{k+1} \left[ (q^{-j-1} - q^{-k})^{2-\beta} - (q^{-j-1} - q^{-k-1})^{2-\beta} + (q^{-j}-q^{-k-1})^{2-\beta} - (q^{-j} - q^{-k})^{2-\beta} \right]\\
\notag &\quad + C \int_{q^{-k-1}}^{q^{-k}}\left[ (q^{-j-1} - q^{-k})^{2-\beta} - (q^{-j-1} - s)^{2-\beta} + (q^{-j} - s)^{2-\beta} - (q^{-j} - q^{-k})^{2-\beta} \right] ds\\
&=: J_1 + J_2.
\end{align}
Let us consider the first term $J_1$.
By the mean value theorem, we can find some $a$ and $b$ such that
\[ q^{-j-1} - q^{-k} \le a \le q^{-j-1} - q^{-k-1} < q^{-j} - q^{-k} \le b \le q^{-j} - q^{-k-1}. \]
and
\begin{align*}
&(q^{-j-1} - q^{-k})^{2-\beta} - (q^{-j-1} - q^{-k-1})^{2-\beta} + (q^{-j}-q^{-k-1})^{2-\beta} - (q^{-j} - q^{-k})^{2-\beta}\\
&= (2-\beta)(b^{1-\beta} - a^{1-\beta})(q^{-k} - q^{-k-1})\\
& \le (2-\beta)[(q^{-j} - q^{-k-1})^{1-\beta} - (q^{-j-1} - q^{-k})^{1-\beta}]q^{-k}.
\end{align*}
Suppose $j \le k - 2$.
By the mean value theorem again, we can find some $\xi$ between $q^{-j} - q^{-k-1}$ and $q^{-j-1} - q^{-k}$ 
such that
\begin{align*}
(q^{-j} - q^{-k-1})^{1-\beta} - (q^{-j-1} - q^{-k})^{1-\beta} 
&= (1-\beta) \xi^{-\beta}[(q^{-j} - q^{-k-1}) - (q^{-j-1} - q^{-k})]\\
& \le (1- \beta) (q^{-j-1} - q^{-k})^{-\beta} q^{-j}\\
& \le (1- \beta) (q^{-j-1} - q^{-j-2})^{-\beta} q^{-j}\\
& \le (1-\beta) (q^{-1}-q^{-2}) (q^{-j})^{1-\beta}.
\end{align*}
It follows that
\begin{align*}
J_1\le C (q^{-j})^{1-\beta}q^{-k}.
\end{align*}

Next, we consider the term $J_2$ in \eqref{corr}. For every $s \in [q^{-k-1}, q^{-k}]$, we can find 
some $\tilde{a}$ and $\tilde{b}$ (depending on $s$) such that
\[ q^{-j-1} - q^{-k} \le \tilde{a} \le q^{-j-1} - s < q^{-j} - q^{-k} \le \tilde{b} \le q^{-j} - s \]
and
\begin{align*}
& (q^{-j-1} - q^{-k})^{2-\beta} - (q^{-j-1} - s)^{2-\beta} + (q^{-j} - s)^{2-\beta} - (q^{-j} - q^{-k})^{2-\beta}\\
& = (2-\beta)(\tilde{b}^{1-\beta} - \tilde{a}^{1-\beta}) (q^{-k}-s)\\
& \le (2-\beta) [(q^{-j}-s)^{1-\beta} - (q^{-j-1} - q^{-k})^{1-\beta}] q^{-k}.
\end{align*}
For $j \le k-2$, by the mean value theorem again, there exists $\eta$ between $q^{-j}-s$ and $q^{-j-1} - q^{-k}$ such that
\begin{align*}
(q^{-j}-s)^{1-\beta} - (q^{-j-1} - q^{-k})^{1-\beta} 
&= (1-\beta) \eta^{-\beta} [(q^{-j}-s) - (q^{-j-1} - q^{-k})]\\
& \le (1-\beta) (q^{-j-1} - q^{-j-2})^{-\beta} q^{j}\\
& \le (1-\beta)(q^{-1} - q^{-2}) (q^{-j})^{1-\beta}.
\end{align*}
Then we have
\begin{align*}
J_2 &=C \int_{q^{-k-1}}^{q^{-k}} \left[(q^{-j-1} - q^{-k})^{2-\beta} - (q^{-j-1} - s)^{2-\beta} + (q^{-j} - s)^{2-\beta} 
- (q^{-j} - q^{-k})^{2-\beta} \right] ds\\
& \le C (q^{-j})^{1-\beta} (q^{-k})^2 \le C (q^{-j})^{1-\beta} q^{-k}.
\end{align*}
Therefore, by combining \eqref{corr}, the upper bounds for $J_1$, $J_2$, and recalling \eqref{Eq:incre_SWE}, we see that 
for $j \le k-2$, the correlation $r_{jk}$ between $Z_j$ and $Z_k$ satisfies
\begin{align}\label{r}
0 \le r_{jk} = \E (Z_j Z_k) \le \frac{C (q^{-j})^{1-\beta} q^{-k}}{(q^{-j})^{1-\beta/2}(q^{-k})^{1-\beta/2}} = C_0 (q^{-(k-j)})^{\beta/2} =: \xi_{jk}.
\end{align}

By \eqref{r}, we can choose a fixed $l \ge 2$ such that $r := \sup\{ r_{jk} : j \le k-l \} < 1$.
Since $\sum_{n=1}^\infty \P(A_n) = \infty$, in order to prove \eqref{ii'}, it suffices to 
prove that for any $\delta > 0$, there exists $m$ such that
\begin{align}\label{ii''}
\liminf_{n \to \infty} \frac{\sum_{k=m}^n \sum_{j=1}^{k-l}[\P(A_j \cap A_k) - \P(A_j)\P(A_k)]}{[\sum_{j=1}^n \P(A_j)]^2} \le \delta.
\end{align}
Let $\delta > 0$ be given and let $m$ be a large integer that will be chosen appropriately depending on $\delta$. 
Let $\rho_k = \frac{4}{(\beta/2)\log q} \log \gamma_k$, so that for $1 \le j \le k - \rho_k$,
\begin{equation}\label{Eq:eta}
\xi_{jk} \le C_0 \gamma_k^{-4}.
\end{equation}
Provided $m$ is large, $1 < k- \rho_k < k-l$ for all $k \ge m$.
By Lemma \ref{slepian}, we have
\begin{equation}\label{sum}
\sum_{k=m}^n \sum_{j=1}^{k-l}[\P(A_j \cap A_k) - \P(A_j)\P(A_k)] \le \left( \sum_{k=m}^n \sum_{j=1}^{\lfloor k-\rho_k \rfloor} 
+ \sum_{k=m}^n \sum_{j=\lfloor k-\rho_k\rfloor}^{k-l} \right) r_{jk} g(\gamma_j, \gamma_k; r_{jk}^*),
\end{equation}
where $r_{jk}^*$ is a number such that $0 \le r_{jk}^* \le r_{jk}$ for each $j, k$.
Let us consider the two sums on the right-hand side of \eqref{sum} separately.
By \eqref{r}, the first sum is
\begin{align*}
& \sum_{k=m}^n \sum_{j=1}^{\lfloor k-\rho_k \rfloor}\frac{r_{jk}}{2\pi({1-r_{jk}^{*2}})^{1/2}} \exp\left( - \frac{\gamma_j^2 + 
\gamma_k^2 - 2r_{jk}^* \gamma_j \gamma_k}{2(1-r_{jk}^{*2})} \right)\\
& \le \sum_{k=m}^n \sum_{j=1}^{\lfloor k-\rho_k \rfloor} \frac{\xi_{jk}\gamma_j \gamma_k}{2\pi(1-\xi_{jk}^2)^{1/2}} 
\exp\left( \frac{-r_{jk}^{*2}(\gamma_j^2+\gamma_k^2) + 2r_{jk}^* \gamma_j\gamma_k}{2(1-r_{jk}^{*2})}\right)
\gamma_j^{-1} e^{-\gamma_j^2/2} \gamma_k^{-1} e^{-\gamma_k^2/2}.
\end{align*}
Note that $\gamma_j < \gamma_k$ for $j < k$. Then by \eqref{tail_est}, \eqref{r} and \eqref{Eq:eta}, the sum is
\begin{align*}
\le 4 \sum_{k=m}^n \sum_{j=1}^{\lfloor k-\rho_k \rfloor} \frac{C_0\gamma_k^{-2}}{(1-C_0^2\gamma_k^{-8})^{1/2}} 
\exp\left( \frac{C_0\gamma_k^{-2}}{1-C_0^2\gamma_k^{-8}} \right) \P(A_j)\P(A_k).
\end{align*}
Since $\gamma_k \to \infty$, we may choose $m$ to be large enough such that this sum is $\le \delta [\sum_{j=1}^n \P(A_j)]^2$.

By \eqref{tail_est}, the second sum on the right-hand side of \eqref{sum} is
\begin{align*}
& \sum_{k=m}^n \sum_{j=\lfloor k-\rho_k \rfloor}^{k-l} \frac{r_{jk}}{2\pi({1-r_{jk}^{*2}})^{1/2}} \exp\left( - \frac{\gamma_j^2 
+ \gamma_k^2 - 2r_{jk}^* \gamma_j \gamma_k}{2(1-r_{jk}^{*2})} \right)\\
& \le \sum_{k=m}^n \sum_{j=\lfloor k-\rho_k \rfloor}^{k-l} \frac{r_{jk}\gamma_j}{2\pi(1-r_{jk}^{*2})^{1/2}}
\exp\left( -\frac{(\gamma_k - r_{jk}^* \gamma_j)^2}{2(1-r_{jk}^{*2})} \right) \gamma_j^{-1} e^{-\gamma_j^2/2}\\
& \le \frac{2}{\sqrt{2\pi}} \sum_{k=m}^n \sum_{j=\lfloor k-\rho_k \rfloor}^{k-l} \frac{\gamma_k}{(1-r_{jk}^{*2})^{1/2}} 
\exp\left(-\frac{(1-r_{jk}^*)^2\gamma_k^2}{2(1-r_{jk}^{*2})} \right) \P(A_j).
\end{align*}
Recall that $r = \sup\{r_{jk} : j \le k-l\} < 1$. Moreover, if $m$ is large enough, then
\[ \frac{\gamma_k \log \gamma_k}{(1-r^2)^{1/2}} \exp\left(-\frac{(1-r)\gamma_k^2}{2(1+r)} \right) \le \delta \]
and $k-\rho_k > k/2$ for all $k \ge m$, so that the last sum above is
\begin{align*}
& \le \frac{2}{\sqrt{2\pi}}\sum_{k=m}^n  \frac{\rho_k\gamma_k}{(1-r^2)^{1/2}} \exp\left(-\frac{(1-r)\gamma_k^2}{2(1+r)}  \right) 
\P(A_{\lfloor k-\rho_k \rfloor})\\
& \le C \sum_{k=m}^n  \frac{\gamma_k \log \gamma_k}{(1-r^2)^{1/2}} \exp\left(-\frac{(1-r)\gamma_k^2}{2(1+r)} \right) 
\P(A_{\lfloor k/2 \rfloor})\\
& \le 2C\delta \sum_{k=1}^n \P(A_k).
\end{align*}
We get that
\[ \sum_{k=m}^n \sum_{j=1}^{k-l} [\P(A_j \cap A_k) - \P(A_j)\P(A_k)] \le \delta \Bigg( \sum_{j=1}^n \P(A_j) \Bigg)^2 
+ 2C\delta \sum_{j=1}^n \P(A_j). \]
Hence \eqref{ii''} follows and the proof of Lemma \ref{Lem:large_dev} is complete.
\end{proof}

We now come to the proof of Proposition \ref{Prop:sim_LIL_LB}.

\begin{proof}[Proof of Proposition \ref{Prop:sim_LIL_LB}]
Fix $\lambda > 0$. It suffices to show that for any $0 \le a < b < \infty$ and $0 < \varepsilon < 1$,
\begin{equation}\label{Eq:LB}
\P\left( \limsup_{h \to 0+} \frac{|\tilde{u}(\tau, \lambda+h) - \tilde{u}(\tau, \lambda)|}{\sqrt{(\tau+\lambda)h^{2-\beta} \log\log(1/h)}} 
\ge (1-\varepsilon) K_\beta \textup{ for all } \tau \in [a, b] \right) = 1.
\end{equation}
To this end, let us fix $a, b$ and $\varepsilon$ for the rest of the proof.

Note that when $q$ is large, $q^{-\frac{2-\beta}{2}} (1 + \frac{q^{-n-1}}{\tau+\lambda})^{1/2} < \varepsilon/4$ uniformly for all $\tau \in [a, b]$.
So we can choose and fix a large $q > 1$ such that
\begin{equation}\label{Eq:q}
(1-\varepsilon/4) \bigg(\frac{q-1}{q} \bigg)^{\frac{2-\beta}{2}} - q^{-\frac{2-\beta}{2}} \bigg(1+\frac{q^{-n-1}}{\tau+\lambda}\bigg)^{1/2} - (1-\varepsilon) > \varepsilon/4
\end{equation}
for all $\tau \in [a, b]$.
We also choose $\delta > 0$ small such that
\begin{equation}\label{Eq:delta}
\frac{\lambda (\varepsilon/4)^2}{\delta} > 1.
\end{equation}
Since we can cover $[a, b]$ by finitely many intervals $[c, d]$ of length $\delta$, we only need to show that
\eqref{Eq:LB} holds for all $\tau \in [c, d]$, where $[c, d] \subset [a, b]$ and $d = c+\delta$.

Let us define the increment of $\tilde{u}$ over a rectangle $(\tau, \tau'] \times (\lambda, \lambda']$ by
\[ \Delta \tilde{u}((\tau, \tau'] \times (\lambda, \lambda']) = \tilde{u}(\tau', \lambda') - \tilde{u}(\tau, \lambda') - \tilde{u}(\tau', \lambda) + \tilde{u}(\tau, \lambda).  \]
Then for all $\tau \in [c, d]$ we can write
\begin{align}\label{Eq:u_decomp}
\begin{aligned}
\tilde{u}(\tau, \lambda + q^{-n}) - \tilde{u}(\tau, \lambda)
&= \tilde{u}(d, \lambda + q^{-n}) - \tilde{u}(d, \lambda + q^{-n-1})\\
& \quad + \tilde{u}(\tau, \lambda + q^{-n-1}) - \tilde{u}(\tau, \lambda)\\
& \quad - \Delta \tilde{u}((\tau, d] \times (\lambda + q^{-n-1}, \lambda + q^{-n}]).
\end{aligned}
\end{align}
By Lemma \ref{Lem:large_dev}, we have
\[ \frac{|\tilde{u}(d, \lambda + q^{-n}) - \tilde{u}(d, \lambda + q^{-n-1})|}{\tilde{\sigma}[(d, \lambda + q^{-n}), (d, \lambda + q^{-n-1})]} 
\ge (1-\varepsilon/4) \sqrt{2 \log\log q^n} \]
infinitely often in $n$ with probability 1. By Lemma \ref{Lem2},
\begin{align*}
&\tilde\sigma[(d, \lambda + q^{-n}), (d, \lambda + q^{-n-1})]\\
& = \frac{K_\beta}{\sqrt{2}} \sqrt{(d+\lambda + q^{-n-1})(q^{-n}-q^{-n-1})^{2-\beta} + (3-\beta)^{-1} (q^{-n} - q^{-n-1})^{3-\beta}},
\end{align*}
so we have
\begin{equation}\label{Eq:LB_u_tilde}
|\tilde{u}(d, \lambda + q^{-n}) - \tilde{u}(d, \lambda + q^{-n-1})| \ge (1-\varepsilon/4) K_\beta\sqrt{(d + \lambda)(q^{-n} - q^{-n-1})^{2-\beta}\log\log q^n} 
\end{equation}
infinitely often in $n$ with probability 1.
Also, by Proposition \ref{Prop:sim_LIL_UB}, with probability 1, for all $\tau \in [c, d]$ simultaneously,
\begin{equation}\label{Eq:UB_u_tilde}
|\tilde{u}(\tau, \lambda + q^{-n-1}) - \tilde{u}(\tau, \lambda)| \le K_\beta \sqrt{(\tau + \lambda + q^{-n-1}) (q^{-n-1})^{2-\beta}\log\log q^{n}}
\end{equation}
eventually for all large $n$.

Next, we derive a bound for the term $\Delta \tilde{u}((\tau, d] \times (\lambda + q^{-n-1}, \lambda + q^{-n}])$.
For $\tau \in [c, d]$, let
\[ \phi_n(\tau) = (1-\varepsilon/4) \bigg( \frac{q-1}{q} \bigg)^{\frac{2-\beta}{2}} (d + \lambda)^{1/2} - q^{-\frac{2-\beta}{2}} 
(\tau+\lambda+q^{-n-1})^{1/2} - (1-\varepsilon) (\tau + \lambda)^{1/2}. \]
Consider the event
\[ A_n = \left\{ \sup_{\tau \in [c, d]} |\Delta\tilde{u}((\tau, d] \times (\lambda+q^{-n-1}, \lambda+q^{-n}])| > \gamma_n \right\}, \]
where 
\[\gamma_n = K_\beta \,\phi_n(d) \sqrt{(q^{-n})^{2-\beta}\log\log q^n}.\]
Consider $n$ large enough such that $q^{-n}-q^{-n-1} \le d-c$.
Then
\[ \P(A_n) \le \P(A_n^1) + \P(A_n^2), \]
where
\begin{align*}
A_n^1 &= \left\{ \sup_{\tau \in [c, d - (q^{-n}-q^{-n-1})]} |\Delta\tilde{u}((\tau, d] \times (\lambda+q^{-n-1}, \lambda+q^{-n}])| > \gamma_n \right\},\\
A_n^2 &= \left\{ \sup_{\tau \in [d-(q^{-n}-q^{-n-1}), d]} |\Delta\tilde{u}((\tau, d] \times (\lambda+q^{-n-1}, \lambda+q^{-n}])| > \gamma_n \right\}.
\end{align*}
We first estimate $\P(A_n^1)$ using Lemma \ref{Lem:T}.
Let $Z(\tau) = \Delta \tilde u((\tau, d] \times (\lambda+q^{-n-1}, \lambda+q^{-n}])$,
$T = [c, d - (q^{-n}-q^{-n-1})]$ and 
\begin{align*}
d_Z(\tau, \tau') &:= \E[(Z(\tau) - Z(\tau'))^2]^{1/2}\\
& = \E[(\tilde u(\tau', \lambda+q^{-n}) - \tilde u(\tau', \lambda+q^{-n-1}) - \tilde u(\tau, \lambda+q^{-n}) + \tilde u(\tau, \lambda+q^{-n-1}))^2]^{1/2}.
\end{align*}
By Lemma \ref{Lem3}, 
\begin{align}\label{dZ}
\notag
d_Z(\tau, \tau')
&\le \begin{cases}
C(q^{-n} - q^{-n-1})^{(2-\beta)/2} |\tau-\tau'|^{1/2} & \text{if } q^{-n} - q^{-n-1} \le |\tau - \tau'|\\
C|\tau-\tau'|^{(2-\beta)/2} (q^{-n} - q^{-n-1})^{1/2} & \text{if } q^{-n} - q^{-n-1} > |\tau-\tau'|
\end{cases}\\
& \le C q^{-n(2-\beta)/2} |\tau-\tau'|^{1/2}.
\end{align}
Let $\sigma^2_T = \sup_{\tau \in T} \E[Z(\tau)^2]$ and
$T_\rho = \{ \tau \in T : \sigma^2_T -\E[Z(\tau)^2] \le \rho^2 \}$.
For all $\tau \in T$, $d-\tau \ge q^{-n}-q^{-n-1}$, so by Lemma \ref{Lem3},
\[ \sigma^2_T
= \frac 1 2 K^2_\beta (q^{-n}-q^{-n-1})^{2-\beta}\Big[(d-c) - \frac{1-\beta}{3-\beta}(q^{-n}-q^{-n-1})\Big]\]
and
\[ \sigma^2_T - \E[Z(\tau)^2]
= \frac 1 2 K^2_\beta (q^{-n}-q^{-n-1})^{2-\beta}(d-\tau).\]
Hence, $T_\rho \subset [d - C q^{n(2-\beta)} \rho^2, d]$.
This and \eqref{dZ} imply that $N(T_\rho, d_Z, \eps) \le C(\rho/\eps)^2$.
Now, we can apply Lemma \ref{Lem:T} with $v = w = 2$ to get that for $n$ large,
\[ \P(A_n^1) \le C \exp\left( - \frac{\gamma_n^2}{2\sigma_T^2} \right)
\le (n \log q)^{-p_n}, \]
where
\[ p_n = \frac{1}{d-c} \bigg( \frac{q}{q-1} \bigg)^{2-\beta} \phi_n(d)^2. \]
Recall that $d= c+ \delta$. By \eqref{Eq:q},
$\phi_n(d)^2 > (d+\lambda)(\eps/4)^2$ for all $n$, so
$p_n > \frac{\lambda (\varepsilon/4)^2}{\delta} > 1$ by \eqref{Eq:delta}.
Hence, $\sum_{n=1}^\infty \P(A_n^1) < \infty$.

To get an upper bound for  $\P(A_n^2)$, we take $T' = [d- (q^{-n}-q^{-n-1}), d]$. Since the size of $T'$ is 
quite small, we can apply Lemma \ref{Lem:T1}. Notice that  $\sigma_{T'}^2 \le C q^{-n(3-\beta)}$,
and by \eqref{dZ},
\[N(T', d_Z, \eps) \le C \frac{q^{-n}-q^{-n-1}}{q^{n(2-\beta)}\eps^2} \le C q^{-n(3-\beta)} \eps^{-2}.\]
Therefore, by Lemma \ref{Lem:T1}, for $n$ large, we get that
\[ \P(A_n^2) \le C \phi_n(d)^2(q^n \log n) \exp(-C' \phi_n(d)^2 q^n \log n).\]
Since $(d+\lambda)(\eps/4)^2\le \phi_n(d)^2 \le d+\lambda$ for all $n$, 
we have $\sum_{n=1}^\infty \P(A_n^2) < \infty$.

Hence, $\P(A_n \textup{ i.o.}) = 0$ by the Borel--Cantelli lemma.
Then, by the monotonic decreasing property of $\phi$, this implies that, 
with probability 1, simultaneously for all $\tau \in [c, d]$,
\begin{equation}\label{Eq:UB_Delta_u}
\big|\Delta \tilde{u}((\tau, d] \times (\lambda + q^{-n-1}, \lambda+q^{-n}]) \big| \le 
K_\beta \phi_n(\tau) \sqrt{(q^{-n})^{2-\beta} \log\log q^n}
\end{equation}
eventually for all large $n$.
By \eqref{Eq:u_decomp} and the triangle inequality,
\begin{align*}
\big|\tilde{u}(\tau, \lambda + q^{-n}) - \tilde{u}(\tau, \lambda)\big|
&\ge \big|\tilde{u}(d, \lambda + q^{-n}) - \tilde{u}(d, \lambda + q^{-n-1}) \big|\\
& \qquad  - \big|\tilde{u}(\tau, \lambda + q^{-n-1}) - \tilde{u}(\tau, \lambda)\big|\\
&  \qquad  - \big|\Delta \tilde{u}((\tau, d] \times (\lambda + q^{-n-1}, \lambda + q^{-n}])\big|.
\end{align*}
Then \eqref{Eq:LB_u_tilde}, \eqref{Eq:UB_u_tilde} and \eqref{Eq:UB_Delta_u} together imply that with probability 1, 
for all $\tau \in [c, d]$ simultaneously,
\begin{align*}
&\big|\tilde{u}(\tau, \lambda+q^{-n}) - \tilde{u}(\tau, \lambda) \big|\\
& \ge \bigg[ (1-\varepsilon/4)\bigg(\frac{q-1}{q}\bigg)^{\frac{2-\beta}{2}}(d+\lambda)^{1/2} - q^{-\frac{2-\beta}{2}}
 (\tau+\lambda + q^{-n-1})^{1/2} - \phi_n(\tau)\bigg] \\
& \qquad \qquad \qquad \times K_\beta \sqrt{(q^{-n})^{2-\beta} \log\log q^n}\\
& \ge (1-\varepsilon) K_\beta \sqrt{(\tau+\lambda) (q^{-n})^{2-\beta} \log\log q^n}
\end{align*}
infinitely often in $n$.
This shows that \eqref{Eq:LB} holds for all $\tau \in [c, d]$ and concludes the proof of Proposition \ref{Prop:sim_LIL_LB}.
\end{proof}

\section{Singularities and Their Propagation}

In this section, we study the existence and propagation of singularities of the stochastic wave equation \eqref{SWE}.
The main result is Theorem \ref{Thm:sing}.

Let us first discuss the interpretation of singularities and how they may arise.
Propositions \ref{Prop:sim_LIL_UB} and \ref{Prop:sim_LIL_LB} imply that LIL holds at any fixed point $(t, x)$:
\[  \limsup_{h \to 0+} \frac{|u(t+\frac{h}{\sqrt{2}}, x+\frac{h}{\sqrt{2}}) - u(t, x)|}{\sqrt{h^{2-\beta} \log\log(1/h)}} 
= K_\beta(\sqrt{2}t)^{1/2} \quad \text{a.s.} \]
It describes the rate of local oscillation of $u$ when $(t, x)$ is fixed. On the other hand, when $(t, x)$ is not fixed, 
the behavior will be different. Indeed, from the uniform modulus of continuity in Theorem 3.1 of \cite{LX19}, 
we know that for $I = [a,a'] \times [-b, b]$,  where $0 < a < a'$ and $b > 0$, there exists a positive finite constant
 $K$ such that
\[ \lim_{h \to 0+} \sup_{\substack{(t, x), (t',x') \in I:\\ 0 < \sigma[(t, x), (t',x')] \le h}}
\frac{|u(t', x') - u(t,x)|}{\sigma[(t, x),(t',x')]\sqrt{\log(1+\sigma[(t,x),(t',x')]^{-1})}} = K
\quad \text{a.s.} \]
Recalling \eqref{Eq:incre_SWE}, this result shows that the largest oscillation in $I$ is of order 
$\sqrt{h^{2-\beta}\log(1/h)}$, which is larger than $\sqrt{h^{2-\beta}\log\log(1/h)}$ as specified by the LIL.
It suggests that the LIL does not hold simultaneously for all
$(t, x) \in I$ and there may be (random) exceptional points with much larger oscillation.
Therefore, we can define singularities as such points where the LIL fails.
More precisely, we say that $(\tau, \lambda)$ is a \emph{singular point} of $\tilde{u}$ in the $\lambda$-direction if
\[ \limsup_{h \to 0+} \frac{|\tilde{u}(\tau, \lambda+h) - \tilde{u}(\tau, \lambda)|}{\sqrt{h^{2-\beta} \log\log(1/h)}} = \infty \]
and a \emph{singular point} in the $\tau$-direction if
\[ \limsup_{h \to 0+} \frac{|\tilde{u}(\tau + h, \lambda) - \tilde{u}(\tau, \lambda)|}{\sqrt{h^{2-\beta} \log\log(1/h)}} = \infty. \]
Our goal is to justify the existence of random singular points and study their propagation.

Fix $\tau_0 > 0$. Let us decompose $\tilde{u}$ into $\tilde{u}_1 + \tilde{u}_2$, where
\[\tilde{u}_i(\tau, \lambda) = u_i\Big(\frac{\tau+\lambda}{\sqrt{2}}, \frac{-\tau+\lambda}{\sqrt{2}}\Big), \quad i = 1,2,\]
and
\begin{align*}
{u}_1(t, x) &= \frac{1}{2}\, W\Big(\Delta(t, x) \cap \Pi(\tau_0) \Big),\\
{u}_2(t, x) &= \frac{1}{2}\, W\Big(\Delta(t, x) \cap \Pi(\tau_0)^c\Big),
\end{align*}
where $\Pi(\tau_0) $ is the vertical stripe $\Pi(\tau_0) = \big\{ (s, y): 0 \le s < \tau_0/\sqrt{2}, y \in \R \big\}$ 
and $\Pi(\tau_0)^c$ is the complement of $\Pi(\tau_0)$.

Let $\mathscr{F}_{\tau_0}$ be the $\sigma$-field generated by $\{W\big(B \cap \Pi(\tau_0)\big) : 
B \in \mathscr{B}_b(\R^2)\}$ and the $\P$-null sets.
Note that $\mathscr{F}_{\tau_0}$ is independent of the process $\tilde{u}_2$.

Following the approach of Walsh \cite{W82} and Blath and Martin \cite{BM08}, we will use Meyer's section 
theorem to prove the existence of a random singularity. Let us recall Meyer's section theorem (\cite{D}, 
Theorem 37, p.18):

Let $(\Omega, \mathscr{G}, \P)$ be a complete probability space and $S$ be a
$\mathscr{B}(\mathbb{R}_+) \times \mathscr{G}$-measurable subset of
$\mathbb{R}_+ \times \Omega$. Then there exists a $\mathscr{G}$-measurable
random variable $T$ with values in $(0, \infty]$ such that
\begin{enumerate}
\item[(a)] the graph of T, denoted by
$[T] := \{ (t, \omega) \in \mathbb{R}_+ \times \Omega : T(\omega) = t \}$,
is contained in $S$;
\item[(b)] $\{ T < \infty \}$ is equal to the projection $\pi(S)$ of $S$ onto $\Omega$.
\end{enumerate}

\smallskip

\begin{lemma}\label{Lem:u_1}
Let $\tau_0 > 0$. Then there exists a positive, finite, $\mathscr{F}_{\tau_0}$-measurable random variable $\Lambda$ 
such that
\[ \limsup_{h \to 0+} \frac{|\tilde{u}_1(\tau_0, \Lambda + h) - \tilde{u}_1(\tau_0, \Lambda)|}{\sqrt{h^{2-\beta}\log\log(1/h)}} 
= \infty \quad \text{a.s.} \]
\end{lemma}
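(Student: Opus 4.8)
The plan is to follow Walsh \cite{W82} and Blath--Martin \cite{BM08}: exhibit a $\mathscr{B}(\R_+)\times\mathscr{F}_{\tau_0}$-measurable set
\[ S = \Big\{(\lambda,\omega)\in(0,\infty)\times\Omega : \limsup_{h\to0+}\frac{|\tilde u_1(\tau_0,\lambda+h)-\tilde u_1(\tau_0,\lambda)|}{\sqrt{h^{2-\beta}\log\log(1/h)}}=\infty\Big\}, \]
prove that its projection $\pi(S)$ onto $\Omega$ satisfies $\P(\pi(S))=1$, and then apply Meyer's section theorem to the complete space $(\Omega,\mathscr{F}_{\tau_0},\P)$ to obtain an $\mathscr{F}_{\tau_0}$-measurable $\Lambda:\Omega\to(0,\infty]$ with $[\Lambda]\subseteq S$ and $\{\Lambda<\infty\}=\pi(S)$; since $\P(\pi(S))=1$ this $\Lambda$ is positive and a.s.\ finite, and $[\Lambda]\subseteq S$ is precisely the assertion of the lemma (modifying $\Lambda$ on the null set $\Omega\setminus\pi(S)$ if one insists on a genuinely finite random variable). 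Measurability of $S$ is routine: $\tilde u_1$ admits a continuous modification (as does $u$), so the $\limsup$ may be taken over rational $h\downarrow0$, giving joint measurability, and $\tilde u_1(\tau_0,\cdot)$ is $\mathscr{F}_{\tau_0}$-measurable by the definition of $u_1$.

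It is useful first to record the structure of $\lambda\mapsto\tilde u_1(\tau_0,\lambda)$. For $\lambda\ge0$, $h>0$ its increment equals $\tfrac12 W(R_{\lambda,h})$, where $R_{\lambda,h}=\{(s,y):0\le s<\tau_0/\sqrt2,\ \sqrt2\lambda-s<y\le\sqrt2(\lambda+h)-s\}$ is a parallelogram strip; by \eqref{Eq:noise_cov2} and Lemma \ref{Lem1}, $\E[(\tilde u_1(\tau_0,\lambda+h)-\tilde u_1(\tau_0,\lambda))^2]=\tfrac12 K_\beta^2\tau_0 h^{2-\beta}$, independent of $\lambda$, and the covariance of two such increments depends on the two base points only through their difference. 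Hence $\{\tilde u_1(\tau_0,\lambda)-\tilde u_1(\tau_0,0):\lambda\ge0\}$ is, up to the multiplicative constant $(\tfrac12 K_\beta^2\tau_0)^{1/2}$, a fractional Brownian motion of Hurst index $H=1-\tfrac{\beta}{2}\in(\tfrac12,1)$; in particular it has stationary increments and is self-similar of order $H$.

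To prove $\P(\pi(S))=1$ I would proceed in two steps. \emph{(a)} First show $p_0:=\P\big(\tilde u_1(\tau_0,\cdot)\text{ has a singular point in }(0,1)\big)>0$; this is a ``fast point'' statement for fractional Brownian motion, which I would prove by a nested second-moment argument. Fix a rapidly decreasing sequence of scales $\delta_k\downarrow0$ (e.g.\ $\delta_k=e^{-e^k}$) and a function $g$ with $g(\delta)\to\infty$ and $g(\delta)/\log(1/\delta)\to0$ (e.g.\ $g(\delta)=(\log\log(1/\delta))^2$). Using the Gaussian lower tail \eqref{tail_est}, the exact increment variance above, and the bound from Lemma \ref{Lem1} on the correlation between increments over disjoint scale-$\delta_k$ intervals (a negative power of the separation), a Paley--Zygmund estimate shows that, with probability $\ge 1-q_k$ where $\sum_k q_k<\infty$, some scale-$\delta_k$ subinterval of the interval chosen at step $k-1$ carries an increment exceeding $(\delta_k^{2-\beta}g(\delta_k))^{1/2}$; keeping a much smaller sub-subinterval at each step and using the a.s.\ uniform modulus of continuity of $\tilde u_1$ to control the resulting end effects, the construction succeeds on an event of probability $\prod_k(1-q_k)>0$ and then yields a point $\lambda$ at which the increments at the scales $\delta_k$ beat $(\delta_k^{2-\beta}\log\log(1/\delta_k))^{1/2}$ by a factor tending to $\infty$, i.e.\ a singular point. \emph{(b)} Then boost $p_0$ to $1$: for disjoint subintervals $(a,a'),(b,b')\subseteq(1,2)$ and $\lambda$ interior to $(a,a')$, the small-$h$ increments $\tfrac12 W(R_{\lambda,h})$ involve only the noise over the strip $\{(s,y):0\le s<\tau_0/\sqrt2,\ \sqrt2 a-s\le y\le\sqrt2 a'-s\}$, and such strips are disjoint for disjoint intervals, so the events ``$\exists\,\lambda\in(a,a')$ singular'' are \emph{independent} over disjoint intervals, each of probability $p_0$ by the stationarity and self-similarity of the increments. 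Splitting $(1,2)$ into $N$ equal pieces gives $\P(\pi(S))\ge1-(1-p_0)^N$ for every $N$, hence $\P(\pi(S))=1$; here one also uses that a fixed point is a.s.\ not singular (by the LIL for fractional Brownian motion), so the partition endpoints are irrelevant.

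The main obstacle is step (a): since $H=1-\beta/2>\tfrac12$, the increments of $\tilde u_1(\tau_0,\cdot)$ over nearby intervals are positively correlated with only a slow ($|\mathrm{gap}|^{-\beta}$) decay, so the second moment of the relevant counting variable is inflated by factors like $\exp(r\,g(\delta_k))$ coming from close pairs; the scales $\delta_k$ must be taken sparse enough that the number of available scale-$\delta_k$ subintervals overwhelms this inflation, which is exactly why it pays to target the $\log\log$ growth rate (so that $g(\delta_k)$ grows only polynomially in $k$) rather than Lévy's $\log$ rate. Everything else in the argument is bookkeeping.
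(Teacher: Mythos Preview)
Your overall framework matches the paper's: identify $\lambda\mapsto\tilde u_1(\tau_0,\lambda)-\tilde u_1(\tau_0,0)$ as a constant multiple of fractional Brownian motion with Hurst index $H=(2-\beta)/2$, define the singular set $S$, and apply Meyer's section theorem. The divergence is in how you show $\P(\pi(S))=1$, and your step (b) contains a genuine error.

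You claim that for disjoint intervals $(a,a')$ and $(b,b')$ the events ``$\exists\,\lambda\in(a,a')$ singular'' are \emph{independent}, because the increments over each interval involve only the noise on disjoint parallelogram strips. But the noise $W$ is white in time and \emph{colored} in space: by \eqref{Eq:noise_cov2}, for bounded Borel sets $A,B$ that share time slices,
\[
\E[W(A)W(B)]=\int_{\R_+}ds\int_\R dy\int_\R dy'\,{\bf 1}_A(s,y)\,|y-y'|^{-\beta}\,{\bf 1}_B(s,y')>0
\]
even when $A$ and $B$ are spatially disjoint. Your two strips overlap fully in the time coordinate $0\le s<\tau_0/\sqrt2$, so the corresponding sigma-fields are \emph{not} independent, and the product bound $\P(\pi(S))\ge 1-(1-p_0)^N$ is unjustified. (Equivalently: fBm with $H>1/2$ has positively correlated, long-range dependent increments; the events you want to decouple are not independent.) A repair would need a zero--one law or an ergodicity argument for the stationary increments of fBm, not disjoint-support independence.

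The paper sidesteps both your steps (a) and (b). Once the process is recognized as fBm, it invokes the \emph{exact} uniform modulus of continuity for fBm (Khoshnevisan--Shi \cite{KS00}): almost surely, on every rational interval $[a,b]$,
\[
\limsup_{h\to0+}\sup_{\lambda\in[a,b]}\frac{|\tilde v_1(\tau_0,\lambda+h)-\tilde v_1(\tau_0,\lambda)|}{\sqrt{h^{2-\beta}\log(1/h)}}
\]
equals a fixed positive constant. This already holds with probability one, so no bootstrapping is needed. A nested-intervals construction in the style of Orey--Taylor \cite{OT74} then produces, on that same full-probability event, a point $\lambda^*$ at which increments of size $\sqrt{h^{2-\beta}\log(1/h)}$ occur along a sequence $h\downarrow0$; since $\log(1/h)/\log\log(1/h)\to\infty$, this $\lambda^*$ is singular. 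Thus $\Omega^*\subset\pi(S)$ directly, and Meyer's section theorem finishes. Your second-moment/Paley--Zygmund machinery in step (a) is replaced by a citation, and the flawed step (b) is unnecessary.
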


\begin{proof}
Note that
\[\limsup_{h \to 0+} \frac{|\tilde{u}_1(\tau_0, \Lambda + h) - \tilde{u}_1(\tau_0, \Lambda)|}{\sqrt{h^{2-\beta}\log\log(1/h)}} 
= \limsup_{h \to 0+} \frac{|\tilde{v}_1(\tau_0, \Lambda + h) - \tilde{v}_1(\tau_0, \Lambda)|}{\sqrt{h^{2-\beta}\log\log(1/h)}}, 
\]
where $\tilde{v}_1(\tau_0, \lambda) = \tilde{u}_1(\tau_0, \lambda) - \tilde{u}_1(\tau_0, 0)$.
The covariance for the process $\{ \tilde{v}_1(\tau_0, \lambda), \lambda \ge 0 \}$ is
\begin{align*}
\E[\tilde{v}_1(\tau_0, \lambda) \tilde{v}_1(\tau_0, \lambda')] = \frac{1}{4}\, \E[W(A_\lambda)W(A_{\lambda'})]
\end{align*}
for $\lambda, \lambda' \ge 0$, where $A_\lambda = \{ (t, x) : 0 \le t < \tau_0/\sqrt{2}, -t < x \le \sqrt{2}\lambda - t \}$.
By \eqref{Eq:noise_cov2} and Lemma \ref{Lem1},
\begin{align*}
\E[\tilde{v}_1(\tau_0, \lambda) \tilde{v}_1(\tau_0, \lambda')] 
& = \frac{1}{8\pi} \int_0^{\tau_0/\sqrt{2}} ds \int_{-\infty}^\infty \frac{C_\beta d\xi}{|\xi|^{1-\beta}}
 \mathscr{F}{\bf 1}_{[-s, \, \sqrt{2}\lambda - s]}(\xi) \overline{\mathscr{F}{\bf 1}_{[-s, \, \sqrt{2}\lambda' - s]}(\xi)}\\
& = \frac{1}{4(2-\beta)(1-\beta)} \int_0^{\tau_0/\sqrt{2}} \Big( |\sqrt{2}\lambda|^{2-\beta} + |\sqrt{2}\lambda'|^{2-\beta} 
- |\sqrt{2}\lambda - \sqrt{2}\lambda'|^{2-\beta} \Big) ds\\
& = \frac{2^{-(3+\beta)/2}\,\tau_0}{(2-\beta)(1-\beta)} \Big( |\lambda|^{2-\beta} + |\lambda'|^{2-\beta} - |\lambda - \lambda'|^{2-\beta} \Big).
\end{align*}
It follows that $\{ C_0 \tilde{v}_1(\tau_0, \lambda), \lambda \ge 0 \}$ is a fractional Brownian motion of Hurst parameter 
$(2-\beta)/2$ for some constant $C_0$ depending on $\tau_0$ and $\beta$.

Let
\[ 
S = \Bigg\{ (\lambda , \omega) \in \R_+ \times \Omega : \limsup_{h \to 0+} \frac{|\tilde{v}_1(\tau_0, \lambda + h)(\omega) 
- \tilde{v}_1(\tau_0, \lambda)(\omega)|}{\sqrt{h^{2-\beta} \log\log(1/h)}} = \infty \Bigg\}. \]
Then $S$ is $\mathscr{B}(\R_+) \times \mathscr{F}_{\tau_0}$-measurable.
Using Meyer's section theorem, we can find a positive $\mathscr{F}_{\tau_0}$-measurable random variable $\Lambda$ such that
(a) $[\Lambda] \subset S$, and (b) $\pi(S) = \{\Lambda < \infty\}$.

We claim that $\Lambda < \infty$ a.s. Indeed, by the uniform modulus of continuity for fractional Brownian motion 
(cf. \cite{KS00}, Theorem 1.1), for any $0 \le a < b$,
\begin{equation}\label{fBm:mod_cont}
\limsup_{h \to 0+} \sup_{\lambda\in [a, b]} \frac{|\tilde{v}_1(\tau_0, \lambda+h) - \tilde{v}_1(\tau_0, \lambda)|}{\sqrt{h^{2-\beta} \log(1/h)}} 
= C_0^{-1} \sqrt{2} \quad \text{a.s.}
\end{equation}
We now use an argument with nested intervals (cf.~\cite{OT74}, Theorem 1) to show the existence of a random 
$\lambda^*$ such that 
\begin{equation}\label{tilde_v_singularity}
\limsup_{h \to 0+} \frac{|\tilde{v}_1(\tau_0, \lambda^*+h) - \tilde{v}_1(\tau_0, \lambda^*)|}{\sqrt{h^{2-\beta} \log\log(1/h)}} = \infty
\end{equation}
with probability 1.
First take an event $\Omega^*$ of probability 1 such that \eqref{fBm:mod_cont} holds for all intervals $[a, b]$, 
where $a$ and $b$ are rational numbers. Let 
\[\varphi(h) = \frac{1}{2}C_0^{-1}\sqrt{2h^{2-\beta} \log(1/h)}.\]
Let $h_0 > 0$ be small such that $\varphi$ is increasing on $[0, h_0]$. 
For an $\omega \in \Omega^*$, we define two sequences $(\lambda_n), (\lambda_n')$ as follows.
By \eqref{fBm:mod_cont}, we can choose $\lambda_1, \lambda_1'$, say in $[1, 2]$, with $\lambda_1 < \lambda_1'$ 
such that $\lambda_1'-\lambda_1 < h_0$ and 
\[|\tilde{v}_1(\tau_0, \lambda_1') - \tilde{v}_1(\tau_0, \lambda_1)| > \varphi(\lambda_1'-\lambda_1).\]
Suppose $n\ge 1$ and $\lambda_n$, $\lambda_n'$ are chosen with $0 < \lambda'_n - \lambda_n \le 2^{-n+1}$ and
\[|\tilde{v}_1(\tau_0, \lambda_n') - \tilde{v}_1(\tau_0, \lambda_n)| > \varphi(\lambda_n'-\lambda_n).\]
Since $\tilde{v}_1$ is continuous in $\lambda$ and $\varphi(h)$ is increasing for $h$ small, we can find some 
$\tilde{\lambda}_n$ such that $\lambda_n < \tilde{\lambda}_n < \min\{ \lambda_n', \lambda_n + 2^{-n} \}$ and
\begin{equation}\label{Eq:lower_function}
|\tilde{v}_1(\tau_0, \lambda_n') - \tilde{v}_1(\tau_0, \lambda)| > \varphi(\lambda_n' - \lambda) \quad 
\text{ for all }\lambda \in [\lambda_n, \tilde{\lambda}_n].
\end{equation}
Then we can apply \eqref{fBm:mod_cont} for a rational interval $[a, b] \subseteq [\lambda_n, \tilde{\lambda}_n]$ to find 
$\lambda_{n+1}$ and $\lambda_{n+1}'$ such that $\lambda_n \le \lambda_{n+1} < \lambda_{n+1}' \le \tilde{\lambda}_n$ and
\[ |\tilde{v}_1(\tau_0, \lambda_{n+1}') - \tilde{v}_1(\tau_0, \lambda_{n+1})| > \varphi(\lambda_{n+1}' - \lambda_{n+1}). \]
We obtain a sequence of nested intervals $[\lambda_1, \lambda_1'] \supset [\lambda_2, \lambda_2'] \supset \cdots$ 
with lengths $\lambda_n' - \lambda_n \le 2^{-n+1}$. Therefore, the intervals contain a common point $\lambda^* \in [1, 2]$ 
such that $\lambda_{n+1}' \downarrow \lambda^*$. Since 
$\lambda^* \in [\lambda_n, \tilde{\lambda}_n]$ for all $n$, by \eqref{Eq:lower_function} we have
\[ |\tilde{v}_1(\tau_0, \lambda_n') - \tilde{v}_1(\tau_0, \lambda^*)| > \varphi(\lambda_n' - \lambda^*). \]
Hence, for each $\omega \in \Omega^*$, there is at least one $\lambda^* > 0$ (depending on $\omega$)
such that \eqref{tilde_v_singularity} holds.
It implies that $\Omega^* \subset \pi(S)$. Then from (b)
we deduce that $\Lambda < \infty$ a.s., and from (a) we conclude that
\[ \limsup_{h \to 0+} \frac{|\tilde{v}_1(\tau_0, \Lambda+h) - \tilde{v}_1(\tau_0, \Lambda)|}{\sqrt{h^{2-\beta}\log\log(1/h)}} 
= \infty \quad \text{a.s.} \]
The proof of Lemma \ref{Lem:u_1} is complete.
\end{proof}

\smallskip

\begin{lemma}\label{Lem:u_2}
For any $\tau_0 > 0$ and $\lambda > 0$, 
\[\P\left( \limsup_{h \to 0+} \frac{|\tilde{u}_2(\tau, \lambda+h) - \tilde{u}_2(\tau, \lambda)|}{\sqrt{h^{2-\beta}\log\log(1/h)}} 
= K_\beta(\tau - \tau_0+\lambda)^{1/2} \textup{ for all } \tau \ge \tau_0 \right) = 1. \]
\end{lemma}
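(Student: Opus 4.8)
The plan is to reduce Lemma \ref{Lem:u_2} to the simultaneous LIL for $\tilde u$ already established in Propositions \ref{Prop:sim_LIL_UB} and \ref{Prop:sim_LIL_LB}, by exhibiting $\{\tilde u_2(\tau+\tau_0,\lambda):\tau\ge 0\}$ as a translate of the noise used for $\tilde u$. Fix $\lambda>0$ and $\tau_0>0$, and write a noise point $(s,y)\in\R_+\times\R$ in the rotated coordinates $\sigma=(s-y)/\sqrt2$, $\mu=(s+y)/\sqrt2$. A direct computation shows that in these coordinates the triangle $\Delta\big(\tfrac{\tau+\lambda}{\sqrt2},\tfrac{-\tau+\lambda}{\sqrt2}\big)$ defining $\tilde u(\tau,\lambda)$ is exactly $\{\sigma\le\tau,\ \mu\le\lambda,\ \sigma+\mu\ge0\}$ — the condition $s\ge0$ becoming $\sigma+\mu\ge0$, the light-cone condition $|x-y|\le t-s$ becoming $\{\sigma\le\tau,\ \mu\le\lambda\}$, and $s\le t$ then being automatic — while the additional constraint $s\ge\tau_0/\sqrt2$ defining $\tilde u_2$ reads $\sigma+\mu\ge\tau_0$. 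Hence, for $\tau\ge0$, the set defining $\tilde u_2(\tau+\tau_0,\lambda)=\tfrac12 W\big(\Delta(\cdot)\cap\{s\ge\tau_0/\sqrt2\}\big)$ is, in these coordinates, $\{\sigma\le\tau+\tau_0,\ \mu\le\lambda,\ \sigma+\mu\ge\tau_0\}$, which is the image of $\{\sigma\le\tau,\ \mu\le\lambda,\ \sigma+\mu\ge0\}$ under the shift $\sigma\mapsto\sigma+\tau_0$; in the original variables this is a translation of the noise by the fixed vector $(-\tau_0/\sqrt2,\ \tau_0/\sqrt2)$, the same for every $(\tau,\lambda)$.

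Since the covariance \eqref{Eq:noise_cov2} of $W$ depends only on the relative positions of the two sets, $W$ is stationary under translations, so applying one fixed translation to all the regions above gives the identity in law
\[
\big\{\tilde u_2(\tau+\tau_0,\lambda):\tau\ge0,\ \lambda\ge0\big\}\ \overset{d}{=}\ \big\{\tilde u(\tau,\lambda):\tau\ge0,\ \lambda\ge0\big\},
\]
which, by continuity of the paths, is an identity of laws on path space. Combining Proposition \ref{Prop:sim_LIL_UB} with Proposition \ref{Prop:sim_LIL_LB}, for the fixed $\lambda>0$ we have, almost surely,
\[
\limsup_{h\to0+}\frac{|\tilde u(\tau,\lambda+h)-\tilde u(\tau,\lambda)|}{\sqrt{(\tau+\lambda)h^{2-\beta}\log\log(1/h)}}=K_\beta\qquad\text{for all }\tau\in[0,\infty).
\]
The event on the left is determined by the sample path of $\{\tilde u(\tau,\lambda):\tau\ge0\}$ (its measurability being already part of Propositions \ref{Prop:sim_LIL_UB}--\ref{Prop:sim_LIL_LB}), so the identity in law lets us replace $\tilde u(\tau,\lambda)$ by $\tilde u_2(\tau+\tau_0,\lambda)$; renaming $\tau+\tau_0$ as the running variable (now taking values in $[\tau_0,\infty)$, so that $\tau+\lambda$ becomes $\tau-\tau_0+\lambda$) and moving the deterministic factor to the right-hand side yields precisely the assertion of Lemma \ref{Lem:u_2}.

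The only point requiring genuine care is the rotated-coordinate bookkeeping of the triangular domains — in particular checking that the two light-cone inequalities reduce to the half-plane conditions $\sigma\le\tau$, $\mu\le\lambda$ and that cutting at $\{s\ge\tau_0/\sqrt2\}$ produces an honest translate of the original cone rather than a reflected or sheared copy. Everything else (the translation-invariance of $W$, and the transfer of the ``for all $\tau$'' statement through an equality in law of continuous processes) is routine, and no probabilistic input beyond Propositions \ref{Prop:sim_LIL_UB} and \ref{Prop:sim_LIL_LB} is needed.
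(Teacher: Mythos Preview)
Your proof is correct and follows essentially the same approach as the paper: you establish the equality in law $\{\tilde u_2(\tau_0+\tau,\lambda):\tau,\lambda\ge0\}\overset{d}{=}\{\tilde u(\tau,\lambda):\tau,\lambda\ge0\}$ via translation invariance of the noise covariance \eqref{Eq:noise_cov2} (the paper does this in the original $(s,y)$ coordinates with the shift $c=(\tau_0/\sqrt2,-\tau_0/\sqrt2)$, while you work in the rotated coordinates $(\sigma,\mu)$), and then transfer the simultaneous LIL from Propositions \ref{Prop:sim_LIL_UB} and \ref{Prop:sim_LIL_LB}. The only minor point is that the sign you give for the translation vector in the original variables corresponds to shifting the noise rather than the domains, but this is a harmless convention issue and does not affect the argument.
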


\begin{proof}
By Propositions \ref{Prop:sim_LIL_UB} and \ref{Prop:sim_LIL_LB}, 
\[
\P\left( \limsup_{h \to 0+} \frac{|\tilde{u}(\tau, \lambda+h) - \tilde{u}(\tau, \lambda)|}{\sqrt{h^{2-\beta}\log\log(1/h)}} 
= K_\beta (\tau+\lambda)^{1/2} \textup{ for all } \tau \ge 0 \right) = 1. 
\]
Then the result can be obtained by the observation that $\{ \tilde{u}_2(\tau_0 + \tau, \lambda), \tau, \lambda \ge 0 \}$ 
has the same distribution as $\{ \tilde{u}(\tau, \lambda), \tau, \lambda \ge 0 \}$.
Indeed, for any bounded Borel sets $A, B$ in $\R_+ \times \R$ and $c = (c_1, c_2) \in \R_+ \times \R$, by 
\eqref{Eq:noise_cov2} and change of variables we have
\begin{align*}
\E \big[W(A+c)W(B+c) \big] 
&= \int_{c_1}^\infty ds \int_\R dy \int_\R dy' \, {\bf 1}_{A}(s-c_1, y-c_2) |y-y'|^{-\beta} {\bf 1}_{B}(s-c_1, y-c_2)\\
&= \int_0^\infty ds \int_\R dy \int_\R dy' \, {\bf 1}_{A}(s, y) |y-y'|^{-\beta} {\bf 1}_{B}(s, y)\\
&=\E \big[W(A)W(B)\big].
\end{align*}
Since 
\[ \Delta \Big(\frac{\tau_0+\tau+\lambda}{\sqrt{2}}, \frac{-\tau_0-\tau+\lambda}{\sqrt{2}}\Big) \cap \Pi(\tau_0)^c 
= \Delta\Big(\frac{\tau+\lambda}{\sqrt{2}}, \frac{-\tau+\lambda}{\sqrt{2}}\Big) + c,
\]
where $c = (\frac{\tau_0}{\sqrt{2}}, -\frac{\tau_0}{\sqrt{2}})$, it follows that for any $\tau, \lambda, \tau', \lambda' \ge 0$,
\begin{align*}
\E\big[\tilde{u}_2(\tau_0 + \tau, \lambda) \tilde{u}_2(\tau_0 + \tau', \lambda')\big]
&= \frac{1}{4}\, \E\bigg[W\Big(\Delta\Big(\frac{\tau_0+\tau+\lambda}{\sqrt{2}}, \frac{-\tau_0-\tau+\lambda}{\sqrt{2}}\Big) \cap \Pi(\tau_0)^c\Big)\\
&\qquad \times W\Big(\Delta\Big(\frac{\tau_0+\tau'+\lambda'}{\sqrt{2}}, \frac{-\tau_0-\tau'+\lambda'}{\sqrt{2}}\Big) \cap\Pi(\tau_0)^c \Big)\bigg]\\
& = \frac{1}{4} \, \E\bigg[ W\Big(\Delta\Big(\frac{\tau+\lambda}{\sqrt{2}}, \frac{-\tau+\lambda}{\sqrt{2}}\Big)\Big) W\Big(\Delta\Big(\frac{\tau'+\lambda'}{\sqrt{2}}, \frac{-\tau'+\lambda}{\sqrt{2}}\Big)\Big)\bigg]\\
& = \E[\tilde{u}(\tau, \lambda) \tilde{u}(\tau', \lambda')].
\end{align*}
The result follows immediately.
\end{proof}

We are now in a position to state and prove our main theorem below.
The first part shows that if we fix $\tau_0 > 0$, then based on the information from the $\sigma$-field 
$\mathscr{F}_{\tau_0}$,  we are able to show the existence of a random singularity $(\tau_0, \Lambda)$
 in the $\lambda$-direction. The second part says that if $(\tau_0, \Lambda)$ is a singular point in the 
$\lambda$-direction, then $(\tau, \Lambda)$ is also a singular point for all $\tau \ge \tau_0$.
In other words, singularities in the $\lambda$-direction propagate orthogonally, along the line that is 
parallel to the $\tau$-axis. By symmetry, it follows immediately that singularities in the $\tau$-direction 
propagate along the line parallel to the $\lambda$-axis. These are the directions of the characteristic 
lines $t+x = c$ and $t-x = c$.

\begin{theorem}\label{Thm:sing}
Let $\tau_0 > 0$. The following statements hold.
\begin{enumerate}
\item[(i)] There exists a positive, finite, $\mathscr{F}_{\tau_0}$-measurable random variable $\Lambda$ such that
\[ \limsup_{h \to 0+} \frac{|\tilde{u}(\tau_0, \Lambda + h) - \tilde{u}(\tau_0, \Lambda)|}{\sqrt{h^{2-\beta}\log\log(1/h)}} = \infty \quad \text{a.s.} \]
\item[(ii)] If $\Lambda$ is any positive, finite, $\mathscr{F}_{\tau_0}$-measurable random variable, then on an event of probability 1, we have
\[ \limsup_{h \to 0+} \frac{|\tilde{u}(\tau_0, \Lambda + h) - \tilde{u}(\tau_0, \Lambda)|}{\sqrt{h^{2-\beta}\log\log(1/h)}} 
= \infty \, \Leftrightarrow \, \limsup_{h \to 0+} \frac{|\tilde{u}(\tau, \Lambda + h) - \tilde{u}(\tau, \Lambda)|}{\sqrt{h^{2-\beta}\log\log(1/h)}} = \infty \]
for all $\tau > \tau_0$ simultaneously.
\end{enumerate}
\end{theorem}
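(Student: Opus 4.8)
The plan is to build everything on the decomposition $\tilde u=\tilde u_1+\tilde u_2$ already set up, in which $\tilde u_1$ is $\mathscr F_{\tau_0}$-measurable and $\tilde u_2$ is independent of $\mathscr F_{\tau_0}$, together with one geometric observation. Writing $\tilde\Delta(\tau,\lambda):=\Delta\big(\frac{\tau+\lambda}{\sqrt2},\frac{-\tau+\lambda}{\sqrt2}\big)$, the change of variables $(\tau,\lambda)\mapsto(s,y)$ shows
\[ \tilde\Delta(\tau,\lambda)=\big\{(s,y):s\ge0,\ y-s\ge-\sqrt2\tau,\ s+y\le\sqrt2\lambda\big\}, \]
so the edge carrying the $\lambda$-dependence is $\{s+y=\sqrt2\lambda\}$, while the $\tau$-dependence lives only on the disjoint edge $\{y-s=-\sqrt2\tau\}$. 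Intersecting with $\{s<\tau_0/\sqrt2\}$ and subtracting, one verifies that for all $\tau\ge\tau_0$, $\lambda>0$ and $h>0$,
\[ \tilde u_1(\tau,\lambda+h)-\tilde u_1(\tau,\lambda)=\tfrac12\,W\!\left(\Big\{(s,y):0\le s<\tfrac{\tau_0}{\sqrt2},\ \sqrt2\lambda-s<y\le\sqrt2(\lambda+h)-s\Big\}\right), \]
because $\tau\ge\tau_0$ forces $s<\tau_0/\sqrt2\le\tau/\sqrt2<\frac{\lambda+\tau}{\sqrt2}$, which keeps this strip inside $\tilde\Delta(\tau,\lambda+h)$ and outside $\tilde\Delta(\tau,\lambda)$. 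Hence the $\lambda$-increment of $\tilde u_1$ is \emph{the same for every $\tau\ge\tau_0$}, and consequently, for all such $\tau$,
\[ \big[\tilde u(\tau,\lambda+h)-\tilde u(\tau,\lambda)\big]-\big[\tilde u(\tau_0,\lambda+h)-\tilde u(\tau_0,\lambda)\big]=\big[\tilde u_2(\tau,\lambda+h)-\tilde u_2(\tau,\lambda)\big]-\big[\tilde u_2(\tau_0,\lambda+h)-\tilde u_2(\tau_0,\lambda)\big]. \]

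Next I would control the future part $\tilde u_2$ at the random level $\Lambda$. Since $\Lambda$ is $\mathscr F_{\tau_0}$-measurable and $\tilde u_2$ is independent of $\mathscr F_{\tau_0}$, the pair is independent, so Lemma \ref{Lem:u_2} (which holds for each fixed level, simultaneously in $\tau\ge\tau_0$) can be transferred to the level $\Lambda$ by conditioning: almost surely,
\[ \limsup_{h\to0+}\frac{|\tilde u_2(\tau,\Lambda+h)-\tilde u_2(\tau,\Lambda)|}{\sqrt{h^{2-\beta}\log\log(1/h)}}=K_\beta(\tau-\tau_0+\Lambda)^{1/2}<\infty \quad\text{for all }\tau\ge\tau_0\text{ simultaneously}, \]
and in particular this $\limsup$ equals $K_\beta\Lambda^{1/2}<\infty$ when $\tau=\tau_0$.

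For part (i), take the $\mathscr F_{\tau_0}$-measurable $\Lambda$ furnished by Lemma \ref{Lem:u_1}, for which $\limsup_{h\to0+}|\tilde u_1(\tau_0,\Lambda+h)-\tilde u_1(\tau_0,\Lambda)|/\sqrt{h^{2-\beta}\log\log(1/h)}=\infty$ a.s. Splitting $\tilde u(\tau_0,\Lambda+h)-\tilde u(\tau_0,\Lambda)$ as $[\tilde u_1(\tau_0,\Lambda+h)-\tilde u_1(\tau_0,\Lambda)]+[\tilde u_2(\tau_0,\Lambda+h)-\tilde u_2(\tau_0,\Lambda)]$ and using $\limsup|a+b|\ge\limsup|a|-\limsup|b|$ with the first term's normalized $\limsup$ infinite and the second's finite, we obtain the assertion of (i).

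For part (ii), the last display of the first paragraph shows that, on a single event of probability $1$ and simultaneously for all $\tau>\tau_0$, the normalized increments $X_h:=[\tilde u(\tau,\Lambda+h)-\tilde u(\tau,\Lambda)]/\sqrt{h^{2-\beta}\log\log(1/h)}$ and $Y_h:=[\tilde u(\tau_0,\Lambda+h)-\tilde u(\tau_0,\Lambda)]/\sqrt{h^{2-\beta}\log\log(1/h)}$ satisfy $\limsup_{h\to0+}|X_h-Y_h|\le K_\beta(\tau-\tau_0+\Lambda)^{1/2}+K_\beta\Lambda^{1/2}<\infty$. From $|X_h|\le|Y_h|+|X_h-Y_h|$ and $|Y_h|\le|X_h|+|X_h-Y_h|$ it follows that $\limsup_h|X_h|=\infty\Leftrightarrow\limsup_h|Y_h|=\infty$, simultaneously in $\tau>\tau_0$, which is the stated equivalence. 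The main obstacle is the geometric observation of the first paragraph: one has to check carefully, from the picture of $\Delta(t,x)$ in the rotated coordinates (cf.\ Figures \ref{fig1}--\ref{fig2}), that increasing $\tau$ only appends a sliver along the edge $\{y-s=-\sqrt2\tau\}$ and leaves the thin $\lambda$-increment strip near $\{s+y=\sqrt2\lambda\}$ untouched once $s<\tau_0/\sqrt2$; everything after that is routine $\limsup$ bookkeeping together with Lemmas \ref{Lem:u_1} and \ref{Lem:u_2}.
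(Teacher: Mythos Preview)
Your proposal is correct and follows essentially the same approach as the paper: both arguments rest on the decomposition $\tilde u=\tilde u_1+\tilde u_2$, Lemmas \ref{Lem:u_1} and \ref{Lem:u_2}, the independence of $\Lambda$ and $\tilde u_2$, and the geometric fact that $\tilde u_1(\tau,\lambda+h)-\tilde u_1(\tau,\lambda)$ is independent of $\tau$ for $\tau\ge\tau_0$. The only cosmetic difference is that in (ii) the paper sandwiches $L(\tau,\Lambda)$ between $L_1(\tau_0,\Lambda)\pm K_\beta(\tau-\tau_0+\Lambda)^{1/2}$, whereas you equivalently bound $\limsup_h|X_h-Y_h|$ directly by the two $\tilde u_2$ contributions.
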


\begin{proof}
To simplify notations, let us denote 
\[ L(\tau, \lambda) = \limsup_{h \to 0+} \frac{|\tilde{u}(\tau, \lambda+h) - \tilde{u}(\tau, \lambda)|}{\sqrt{h^{2-\beta} \log\log(1/h)}} \]
and for $i = 1, 2$,
\[ L_i(\tau, \lambda) = \limsup_{h \to 0+} \frac{|\tilde{u}_i(\tau, \lambda+h) - \tilde{u}_i(\tau, \lambda)|}{\sqrt{h^{2-\beta} \log\log(1/h)}}. \]
As in Walsh \cite{W82} and Blath and Martin \cite{BM08}, we are going to use the fact that for two functions $f$ and $g$, 
provided $\limsup_{h \to 0}|g(h)| < \infty$, we have
\begin{equation}\label{limsup_ineq}
 \limsup_{h \to 0}|f(h)| - \limsup_{h \to 0}|g(h)| \le \limsup_{h \to 0}|f(h)+g(h)|
\le \limsup_{h \to 0}|f(h)| + \limsup_{h \to 0}|g(h)|.
\end{equation}

(i). By Lemma \ref{Lem:u_1}, we can find a positive, finite, $\mathscr{F}_{\tau_0}$-measurable random variable $\Lambda$ such that 
\[ L_1(\tau_0, \Lambda) = \infty \quad \text{a.s.} \]
Since $\Lambda$ and the process $\tilde{u}_2$ are independent, Lemma \ref{Lem:u_2} implies that
\[ L_2(\tau_0, \Lambda) = K_\beta \Lambda^{1/2} \quad \text{a.s.} \]
and this is finite a.s.
Since $\tilde{u} = \tilde{u}_1 + \tilde{u}_2$, it follows from the lower bound of \eqref{limsup_ineq} that
\begin{align*}
L(\tau_0, \Lambda) \ge L_1(\tau_0, \Lambda) - L_2(\tau_0, \Lambda) = \infty \quad \text{a.s.}
\end{align*}
This completes the proof of (i).

(ii). Suppose $\Lambda$ is a positive, finite, $\mathscr{F}_{\tau_0}$-measurable random variable. By \eqref{limsup_ineq}, we have
\begin{align}\label{Ineq:L}
L_1(\tau, \Lambda) - L_2(\tau, \Lambda) \le L(\tau, \Lambda) \le L_1(\tau, \Lambda) + L_2(\tau, \Lambda)
\end{align}
for all $\tau \ge \tau_0$, provided that $L_2(\tau, \Lambda) < \infty$. Observe that for $\tau \ge \tau_0$,
\begin{equation*}\label{Eq:tilde_u1} 
\tilde{u}_1(\tau, \Lambda+h) - \tilde{u}_1(\tau, \Lambda) = \tilde{u}_1(\tau_0, \Lambda+h) - \tilde{u}_1(\tau_0, \Lambda),
\end{equation*}
hence $L_1(\tau, \Lambda) = L_1(\tau_0, \Lambda)$.
Also, by Lemma \ref{Lem:u_2} and independence between $\Lambda$ and $\tilde{u}_2$, we have
\begin{equation*}\label{Eq:tilde_u2}
\P \Big( L_2(\tau, \Lambda) = K_\beta(\tau - \tau_0 + \Lambda)^{1/2} \text{ for all } \tau \ge \tau_0 \Big) = 1.
\end{equation*}
Since $\Lambda$ is finite a.s., it follows from \eqref{Ineq:L} that
\begin{align*}
\P\Big( L_1(\tau_0, \Lambda) - K_\beta(\tau-\tau_0 + \Lambda)^{1/2} \le L(\tau, \Lambda) \le L_1(\tau_0, \Lambda) + K_\beta (\tau-\tau_0 + \Lambda)^{1/2} \text{ for all } \tau \ge \tau_0 \Big) = 1,
\end{align*}
and it implies
\[ \P\Big( L(\tau_0, \Lambda) = \infty \Leftrightarrow L(\tau, \Lambda) = \infty \text{ for all } \tau \ge \tau_0 \Big) = 1. \qedhere \]
\end{proof}

\medskip
\bigskip{\bf Acknowledgements}\,
The research of Y. Xiao is partially supported by NSF
grants DMS-1607089 and DMS-1855185.

\bigskip
\bigskip

\textsc{Cheuk Yin Lee}: Department of Statistics
and Probability, C413 Wells Hall, Michigan State University, East Lansing, MI 48824, United States

Current address: Institut de math\'ematiques, Ecole Polytechnique F\'ed\'erale de Lausanne, Station 8, 
CH-1015 Lausanne, Switzerland\\
E-mail: \texttt{cheuk.lee@epfl.ch}\\

\textsc{Yimin Xiao}: Department of Statistics
and Probability, C413 Wells Hall, Michigan State University, East Lansing, MI 48824, United States\\
E-mail: \texttt{xiao@stt.msu.edu}\\

\end{document}